\providecommand{\abs}[1]{\lvert#1\rvert}
\newtheorem{lemma}{Lemma}
\newtheorem{theorem}{Theorem}
\newtheorem{proposition}{Proposition}
\newtheorem{assumption}{Assumption}
\newtheorem{remark}{Remark}
\begin{document}
\baselineskip .275in

\title{Ergodic inventory control with diffusion demand and general ordering costs}

\author{Bo Wei \quad \quad Dacheng Yao\textsuperscript{\Letter}}
\date{}
\maketitle

\begin{abstract}
\noindent  In this work, we consider a continuous-time inventory system where the demand process follows an inventory-dependent diffusion process. The ordering cost of each order depends on the order quantity and is given by a general function, which is not even necessarily continuous and monotone. By applying a lower bound approach together with a comparison theorem, we show the global optimality of an $(s,S)$ policy for this ergodic inventory control problem.

\noindent \textbf{Keywords:}
stochastic inventory model, general ordering costs, diffusion process, $(s,S)$ policy, impulse control.
\end{abstract}

\section{Introduction}
\label{sec:Introduction}

This paper is a sequel to \cite{HeETAL17}, which investigates a continuous-time inventory system with a Brownian demand process and a quantity-dependent setup cost. In this setting, an $(s,S)$ replenishment policy turns out to be optimal under the average cost criterion. In \cite{HeETAL17}, the setup cost function is only required to be a nonnegative, bounded, and lower semicontinuous function of the order quantity. It is necessary to consider such a general ordering cost structure, because in practice, expenses arising from administration and transportation may not be continuous in the order quantity.
Furthermore, general ordering cost structure was studied by \cite{PereraETAL17,PereraETAL18} in inventory models with deterministic demand and renewal demand, respectively.

In this work, we establish the global optimality of  an $ (s, S) $ policy for ergodic inventory control with an inventory-dependent diffusion demand process under a general ordering cost structure. One may refer to \cite{CadenillasETAL10,BaronETAL11} for state-dependent inventory models and their applications. Ergodic inventory control with a diffusion demand process has been studied in two recent papers by Helmes et al.\ \cite{HelmesETAL17,HelmesETAL18}. More specifically, an $ (s,S) $ policy is proved to be optimal in a subset of admissible policies in \cite{HelmesETAL17}, in which the authors assume the ordering cost is continuous with respect to the order quantity.
In \cite{HelmesETAL18}, the authors proposed a weak convergence approach, which allow them to further  show the global optimality of an $ (s,S) $ policy among all admissible policies. Our work complements their papers by allowing for a more general ordering cost function that may have discontinuities.

The main results in this paper provide a rigorous justification for the following intuitive interpretation of the optimality of $ (s,S) $ policies for ergodic inventory control:
If the demand process has almost sure continuous sample paths, the inventory administrator is allowed to replenish inventory at any level as she wants. Moreover, if the demand process is also Markovian, the distribution of future demand can be determined based on the current state (inventory level).
In this case, an $(s,S)$ policy would be optimal to minimize the average cost, even a general ordering cost function is involved. Such a simple optimal policy stands in stark contrast with optimal ergodic control in discrete-time inventory models: the inventory administrator is only allowed to replenish the inventory at the start of each period,
the reorder level would be different from period to period. Thus, if the setup cost function is not a constant, this dynamic optimization problem would be generally difficult to tackle (see, e.g., \cite{ChaoZipkin08,Caliskan-DemiragETAL12,ZhangCetinkaya17}).

The remainder of this paper is organized as follows. The diffusion inventory model is introduced and the main results are presented in Section \ref{sec:Model-Results}. An $(s,S)$ policy is selected and is proven to be the best one in a subset of admissible policies by a lower bound theorem in Section \ref{sec:subset}. A comparison theorem is provided to establish the global optimality of the $(s,S)$ policy among all admissible policies in Section \ref{sec:comparison}. Finally, Section \ref{sec:concluding} concludes the study.

\section{Problem Formulation and Main Results}
\label{sec:Model-Results}
\subsection{Diffusion Inventory Model}
\label{sec:Model}

Consider a single-item inventory model, where the inventory level process is governed by
\begin{equation}
\label{eq:Z}
Z(t) = x - D(t)+ Q(t), \quad t\geq0,
\end{equation}
where $Z(0-)=x$ denotes the initial inventory level, $D(t)$ and $Q(t)$ represent the cumulative demand process and cumulative order quantity up to time  $t$, respectively.
The inventory-dependent demand process $\{D(t)\}_{t\geq0}$ is represented as
\[
D(t) = \int_{0}^{t} \mu(Z(s))\,\mathrm{d}s + \int_{0}^{t} \sigma(Z(s))\,\mathrm{d}B(s),
\]
where $\{B(t)\}_{t\geq0}$ denotes a standard Brownian motion on $ (\Omega, \mathcal{F}, \mathbb{P}; \mathcal{F}_t, t\geq0) $. We assume that the drift coefficient $\mu(\cdot)$ and the diffusion coefficient $\sigma(\cdot)$ satisfy the following conditions.
\begin{assumption}\label{assumption:coefficients}
\begin{enumerate}[$(a)$]
\item $\mu(\cdot)$ is continuously differentiable, nondecreasing with
$\underline{\mu}:=\lim_{z\to-\infty}\mu(z)>0$ and $\bar{\mu}:=\lim_{z\to\infty}\mu(z)<\infty$.
\item $\sigma(\cdot)$ is continuous, and $\sigma(\cdot)\in[\underline{\sigma},\bar{\sigma}]$, where $\underline{\sigma},\bar{\sigma}>0$ are two finite constants.
\end{enumerate}
\end{assumption}
Without any replenishment, the inventory level process turns out to be a diffusion process $\{X(t)\}_{t\geq0}$ given by
\begin{equation}
\label{eq:inventory-no-order}
X(t) = x - \int_{0}^{t} \mu(X(s))\,\mathrm{d}s - \int_{0}^{t} \sigma(X(s))\,\mathrm{d}B(s).
\end{equation}
For later use, we denote the scale function of $ X $ by
\[
\mathcal{S}(x) = \int_{a}^{x} \exp \Big( \int_{a}^{y} \frac{2\mu(v)}{\sigma^{2}(v)}\,\mathrm{d}v \Big)\, \mathrm{d}y \quad \mbox{for $ x \in \mathbb{R} $},
\]
where $a$ is an arbitrary real number, and the speed measure of $X$ by
\[
\mathcal{M}(\mathrm{d}x) = \frac{1}{\sigma^{2}(x)} \exp \Big(- \int_{a}^{x} \frac{2\mu(v)}{\sigma^{2}(v)}\,\mathrm{d}v \Big) \mathrm{d}x.
\]

We represent the ordering policy by a cumulative order process $Q =  \{Q(t)\}_{t\geq0} $, which is called \emph{admissible} if it satisfies the three conditions as follows: (i) $Q(t)$ is nonnegative for all $ t \geq 0 $;
(ii) The sample paths of $Q$ are nondecreasing and right-continuous with left limits (RCLL); (iii) $Q$ is adapted.

In this work, the ordering cost function $c(\cdot)$ is assumed to satisfy the following conditions.
\begin{assumption}{\label{assumption:c}}
The function $c:\mathbb{R}_+\to\mathbb{R}_+$ is subadditive\footnote{A function $c:\mathbb{R}_+\to\mathbb{R}_+$ is subadditive if $c(\xi_1+\xi_2)\leq c(\xi_1)+c(\xi_2)$ for $\xi_i\geq0$, $i=1,2$.} and lower semicontinuous\footnote{A function $c:\mathbb{R}_+\to\mathbb{R}_+$ is lower semicontinuous if $c(\xi')\leq \liminf_{\xi\to\xi'}c(\xi)$ for each $\xi'>0$.} with
$c(0)=0$ and $c(0+):=\lim_{\xi\downarrow 0} c(\xi)>0$.
\end{assumption}
The ordering cost function satisfies the condition above is very general, and it is not even necessarily continuous (cf. \cite{HelmesETAL17,HelmesStockbridgeZhu2018} for continuous ordering cost) and monotone. In particular, it includes the classical linear cost (cf. \cite{Scarf60,Iglehart63}), all unit quantity discount cost (cf. \cite{AltintasETAL08}), incremental quantity discounted cost (cf. \cite{Porteus71,YaoETAL15}), and quantity-dependent setup cost (cf. \cite{ChaoZipkin08,Caliskan-DemiragETAL12})  as special cases.

Since  $c(0+)=\lim_{\xi\downarrow 0} c(\xi)>0$, we only need to consider impulse control policies,
which can be specified by $\{(\tau_n,\xi_n):n=0,1,2,\cdots\}$ with that $\tau_n$ and $\xi_n$ denote the time and the amount of $n$th order, respectively. For convenience, we assume that $\tau_0=0$ and $\xi_0\geq0$, i.e., no order is placed when $\xi_0=0$.
Then, an admissible policy $Q$ can be denoted as
$Q(t)=\sum_{n=0}^{N(t)}\xi_n$,
where $N(t)=\max\{n\geq0:\tau_n\leq t\}$. We define $\Phi$ as the set including all such admissible policies.

In addition, let $h(z)$ represent the holding and shortage cost rate for inventory level $z\in\mathbb{R}$.
\begin{assumption}
\label{assumption:h}
The function $h:\mathbb{R}\to\mathbb{R}_+$ is polynomially bounded, convex, continuously differentiable except at $z=0$ with $h(0)=0$. Further,  $h'(z)>0$ if $z>0$, and $h'(z)<0$ if $z<0$.
\end{assumption}
\begin{remark}
\label{rem:h}
$(a)$ The boundedness of the coefficient in Assumption \ref{assumption:coefficients} and polynomial boundedness of $h$ in Assumption \ref{assumption:h} imply that
\[
\int_x^{\infty} h(y)\,\mathcal{M}(\mathrm{d}y)<\infty.
\]
$(b)$ Assumption \ref{assumption:h} implies $\lim_{\abs{x}\to\infty}h(x)=\infty$.
\end{remark}

We need to find an admissible policy $Q \in \Phi$ to minimize the following long-run average cost:
\begin{align}
\label{eq:problem}
\mathcal{C}(x,Q)=\limsup_{t\to\infty}\frac{1}{t}\mathbb{E}_x\Big[\int_0^t h(Z(u))\,\mathrm{d}u+\sum_{n=0}^{N(t)}c(\xi_n)\Big],
\end{align}
where $\mathbb{E}_x[\cdot]:=\mathbb{E}_x[\cdot|Z(0-)=x]$.

\subsection{Main Results}

Under an $(s, S)$ policy, a cycle is defined as the duration from $S$ to $s$.
Then, the controlled process $Z$ can be regarded as a regenerative process. Using the regenerative process theory, we have
\[
\alpha(s, S):=\mathcal{C}(S,(s,S))=\frac{\mathbb{E}_S[\int_0^{\tau_S^s}h(Z(u))\,\mathrm{d}u]+c(S-s)}{\mathbb{E}_S[\tau_S^s]},
\]
where $\tau_S^s$ is the duration time of one cycle.
Under Assumptions \ref{assumption:coefficients} and \ref{assumption:h}, we have
\begin{align}
\label{eq:E-h-tau}
\mathbb{E}_S\Big[\int_0^{\tau_S^s}h(Z(u))\,\mathrm{d}u\Big]=2\int_s^S \int_x^{\infty}h(y) \,\mathcal{M}(\mathrm{d}y) \,\mathrm{d}\mathcal{S}(x) \text{ and }
\mathbb{E}_S\big[\tau_S^s\big]=2\int_{s}^S \int_x^{\infty} \mathcal{M}(\mathrm{d}y)\,\mathrm{d}\mathcal{S}(x);
\end{align}
see Proposition 2.6 in \cite{HelmesETAL17}.
Therefore,
\begin{align}
\label{eq:alpha}
\alpha(s, S)=\frac{2\int_s^S \int_x^{\infty} h(y)\,\mathcal{M}(\mathrm{d}y)\,\mathrm{d}\mathcal{S}(x)+c(S-s)}{2\int_s^S\int_{x}^{\infty} \,\mathcal{M}(\mathrm{d}y)\,\mathrm{d}\mathcal{S}(x)}.
\end{align}
Under $(s,S)$ policy, for any initial state $x\in\mathbb{R}$, level $S$ can be reached in finite expected time due to strictly positive demand drift.
Actually, $\alpha(s, S)$ is the average cost which is independent of the initial state $x\in\mathbb{R}$, i.e., $\alpha(s, S)=\mathcal{C}(x,(s,S))$ for any $x\in\mathbb{R}$. In the following lemma, we claim the existence of the best $(s,S)$ policy in minimizing $\alpha(s,S)$.
\begin{lemma}
\label{lem:sS}
Under Assumptions \ref{assumption:coefficients}-\ref{assumption:h}, there exists a finite pair $(s^{\star},S^{\star})$ with $s^{\star}<S^{\star}$ satisfying
\begin{equation}
\label{eq:sS-star}
(s^{\star},S^{\star})=\arg\inf_{s<S} \alpha(s,S).
\end{equation}
\end{lemma}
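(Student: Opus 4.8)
The plan is to show that $\alpha(s,S)$, viewed as a function on the open half-plane $\{(s,S):s<S\}\subset\mathbb{R}^2$, attains its infimum. First I would establish a uniform lower bound on $\alpha$, which also shows $\inf\alpha>-\infty$: since $h\geq 0$ and $c\geq 0$, we trivially have $\alpha(s,S)\geq 0$, but more is needed to rule out minimizing sequences escaping to infinity. Write $\alpha(s,S)=\dfrac{H(s,S)+c(S-s)}{T(s,S)}$ with $H(s,S)=2\int_s^S\!\int_x^\infty h(y)\,\mathcal M(\mathrm dy)\,\mathrm d\mathcal S(x)$ and $T(s,S)=2\int_s^S\!\int_x^\infty \mathcal M(\mathrm dy)\,\mathrm d\mathcal S(x)$. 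Dropping the nonnegative term $c(S-s)$ gives $\alpha(s,S)\geq H(s,S)/T(s,S)$, which is a weighted average of the values $g(x):=\big(\int_x^\infty h(y)\,\mathcal M(\mathrm dy)\big)\big/\big(\int_x^\infty \mathcal M(\mathrm dy)\big)$ over $x\in[s,S]$ against the measure $\mathrm d\mathcal S$. By Remark 1(b), $h(x)\to\infty$ as $|x|\to\infty$, so $g(x)\to\infty$ as $|x|\to\infty$; hence $g$ is bounded below by some constant $m>-\infty$ (in fact $m\geq 0$), giving $\alpha(s,S)\geq m$ everywhere, and moreover $g$ is coercive.

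Next I would pin down a bounded region outside of which $\alpha$ exceeds any fixed level. Fix any reference pair $(s_0,S_0)$ with $s_0<S_0$ and let $A:=\alpha(s_0,S_0)<\infty$ (finite by Remark 1(a), which guarantees $H(s_0,S_0)<\infty$, and since $T(s_0,S_0)\in(0,\infty)$ and $c(S_0-s_0)<\infty$). I claim there is a compact rectangle $K=[s_*,S^*]^2\cap\{s\le S\}$ such that $\alpha(s,S)>A$ whenever $(s,S)\notin K$. Indeed:

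\begin{itemize}
\item If $s\to-\infty$ or $S\to+\infty$ while the interval $[s,S]$ contains a fixed sub-interval on which $g$ is large, coercivity of $g$ forces $H/T$, hence $\alpha$, to exceed $A$; more carefully, since the weight $\mathrm d\mathcal S$ assigns positive mass to the far region where $g>2A$, and the contribution of any bounded central region to $H$ and $T$ is finite, one gets $H(s,S)/T(s,S)>A$ once $s$ or $S$ is far enough out.
\item If the interval shrinks, $S-s\downarrow 0$: then $T(s,S)\to 0$ while $c(S-s)\to c(0+)>0$ by Assumption 2, so $\alpha(s,S)\geq c(S-s)/T(s,S)\to\infty$. (If instead $s,S$ both run off to $\pm\infty$ together with $S-s$ staying bounded, the previous bullet applies.)
\end{itemize}

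Combining these two cases, all minimizing sequences stay in a compact set $K$.

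Finally I would pass to the limit along a minimizing sequence $(s_n,S_n)\in K$, extracting a convergent subsequence $(s_n,S_n)\to(s^\star,S^\star)\in K$. Here the subtlety — and the main obstacle — is that $c$ is only lower semicontinuous and need not be continuous, and the limit might have $s^\star=S^\star$. The second issue is handled by the shrinking-interval bullet above: since $\inf\alpha\le A<\infty$ and $\alpha\to\infty$ as the interval shrinks, the limit pair satisfies $s^\star<S^\star$ strictly. For the first issue, note $H$ and $T$ are continuous in $(s,S)$ on the interior (the inner integral $\int_x^\infty h(y)\,\mathcal M(\mathrm dy)$ is finite and continuous in $x$ by Remark 1(a), and $\mathcal S$ is continuous, so the outer integrals depend continuously on the endpoints), and $T(s^\star,S^\star)>0$. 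Lower semicontinuity of $c$ gives $c(S^\star-s^\star)\le\liminf_n c(S_n-s_n)$. Therefore
\[
\alpha(s^\star,S^\star)=\frac{H(s^\star,S^\star)+c(S^\star-s^\star)}{T(s^\star,S^\star)}\le\liminf_{n\to\infty}\frac{H(s_n,S_n)+c(S_n-s_n)}{T(s_n,S_n)}=\inf_{s<S}\alpha(s,S),
\]
so the infimum is attained at the finite pair $(s^\star,S^\star)$ with $s^\star<S^\star$, proving the lemma. The one routine check I am glossing over is the quantitative estimate in the first bullet — showing explicitly that pushing an endpoint to infinity forces $H/T>A$ — which follows from coercivity of $g$ together with the fact that $\mathrm d\mathcal S$ is a genuine (non-atomic, positive) measure giving infinite mass to $\mathbb R$; I expect this to be the most technical part of the write-up but not conceptually hard.
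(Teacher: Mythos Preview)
Your approach is correct and matches the paper's: both restrict to a compact region by showing $H/T$ blows up at the boundary (the paper cites Lemma~2.1 of Helmes et al.\ for this, you sketch it via coercivity of your $g$) and that $\alpha$ blows up as $S-s\downarrow 0$ via $c(0+)>0$, then use lower semicontinuity of $c$ to attain the infimum (the paper packages this as a two-step minimization, first over $s$ for fixed $\Delta=S-s$ and then over $\Delta$, invoking the extreme value theorem for lsc functions; your direct minimizing-sequence argument is equivalent). One minor slip: $H(s,S)/T(s,S)$ is a weighted average of your $g$ against the measure $\big(\int_x^\infty\mathcal M(\mathrm dy)\big)\,\mathrm d\mathcal S(x)$, not against $\mathrm d\mathcal S$ alone --- but since that density (the paper's $\ell$) is bounded between positive constants under Assumption~\ref{assumption:coefficients}, your coercivity argument goes through unchanged.
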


Our main results are as follows.
\begin{theorem}
\label{thm:mainresults}
Suppose Assumptions \ref{assumption:coefficients}-\ref{assumption:h} hold.
The $(s^{\star},S^{\star})$ policy given by \eqref{eq:sS-star} is optimal for the ergodic inventory control problem \eqref{eq:problem}
and $\alpha^{\star}:=\alpha(s^{\star},S^{\star})$ is the optimal cost, where $\alpha(s,S)$ is defined in \eqref{eq:alpha}.
\end{theorem}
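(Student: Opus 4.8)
The plan is to establish the two bounds $\mathcal{C}(x,(s^{\star},S^{\star}))\le\alpha^{\star}$ and $\mathcal{C}(x,Q)\ge\alpha^{\star}$ separately, for every $x\in\mathbb{R}$ and every admissible $Q\in\Phi$. The upper bound is immediate from the material preceding the statement: Lemma~\ref{lem:sS} supplies a finite minimizing pair $(s^{\star},S^{\star})$, the regenerative representation \eqref{eq:alpha} gives $\mathcal{C}(S^{\star},(s^{\star},S^{\star}))=\alpha(s^{\star},S^{\star})=\alpha^{\star}$, and, as noted after \eqref{eq:alpha}, this value does not depend on the initial level, so $\mathcal{C}(x,(s^{\star},S^{\star}))=\alpha^{\star}$ for all $x$. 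All the work is therefore in the lower bound, which I would obtain in the two stages announced in the introduction: a lower bound on a convenient subclass $\Phi'\subset\Phi$ of ``stable'' policies (Section~\ref{sec:subset}), followed by a comparison argument reducing an arbitrary $Q\in\Phi$ to $\Phi'$ (Section~\ref{sec:comparison}).

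For the first stage I would construct a lower-bound (test) function $w\colon\mathbb{R}\to\mathbb{R}$, $C^{1}$ and piecewise $C^{2}$, that on the continuation region $(s^{\star},\infty)$ solves the ordinary differential equation $\tfrac12\sigma^{2}(z)w''(z)-\mu(z)w'(z)+h(z)=\alpha^{\star}$ — written explicitly through the scale function $\mathcal{S}$ and speed measure $\mathcal{M}$ in the spirit of \eqref{eq:E-h-tau}, choosing the particular-plus-constant solution so that the $\mathcal{S}$-component is suppressed and $w$ has only polynomial growth — is extended to $(-\infty,s^{\star}]$ so that $\tfrac12\sigma^{2}w''-\mu w'+h\ge\alpha^{\star}$ holds there too, and satisfies the intervention inequality $w(z)\le w(z+\xi)+c(\xi)$ for all $z\in\mathbb{R}$ and $\xi\ge0$, with equality at $(z,z+\xi)=(s^{\star},S^{\star})$. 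The $C^{1}$-pasting at $s^{\star}$ and the requirement that $S^{\star}$ be the cheapest reorder target are exactly the stationarity conditions characterizing $(s^{\star},S^{\star})$ in Lemma~\ref{lem:sS}; lower semicontinuity of $c$ together with the coercivity of $w$ (inherited from $h$ via Remark~\ref{rem:h}(b)) makes $\inf_{\xi\ge0}\{c(\xi)+w(z+\xi)\}$ attained, and subadditivity of $c$ keeps it consistent with a single-jump reorder, so the discontinuities and nonmonotonicity of $c$ cause no difficulty here. Given such a $w$, applying the It\^{o}--Dynkin formula to $w(Z(t))$ along an arbitrary $Q$, bounding the drift term from below by $\int_{0}^{t}(\alpha^{\star}-h(Z(u)))\,\mathrm{d}u$ and the $n$th jump by $w(Z(\tau_{n}-)+\xi_{n})-w(Z(\tau_{n}-))\ge-c(\xi_{n})$, and discarding the local-martingale part after a standard localization, yields
\[
\mathbb{E}_{x}\Big[\int_{0}^{t}h(Z(u))\,\mathrm{d}u+\sum_{n=0}^{N(t)}c(\xi_{n})\Big]\ \ge\ \alpha^{\star}t+w(x)-\mathbb{E}_{x}[w(Z(t))],
\]
so that $\mathcal{C}(x,Q)\ge\alpha^{\star}+\limsup_{t\to\infty}t^{-1}(w(x)-\mathbb{E}_{x}[w(Z(t))])$. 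Taking $\Phi'$ to be the admissible policies of finite average cost for which $\mathbb{E}_{x}[w(Z(t))]/t\to0$ (for instance, policies under which $Z$ is positive recurrent, including $(s^{\star},S^{\star})$ itself, where $Z$ is eventually confined to the compact set $[s^{\star},S^{\star}]$) then gives $\mathcal{C}(x,Q)\ge\alpha^{\star}$ for all $Q\in\Phi'$, with equality for $(s^{\star},S^{\star})$.

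For the second stage, the comparison theorem, let $Q\in\Phi$ be arbitrary. If $\mathcal{C}(x,Q)=\infty$ there is nothing to prove. Otherwise, finiteness of the holding-cost term together with the coercivity of $h$ (Remark~\ref{rem:h}(b)) keeps the time-averaged law of $Z$ from escaping to $-\infty$, while $c(0+)>0$ forces $\limsup_{t}N(t)/t<\infty$; one may then modify $Q$ — forcing an extra reorder up to $S^{\star}$ whenever $Z$ would fall below a large level $-M$, using subadditivity of $c$ to bound the extra ordering cost incurred and a pathwise comparison with the order-free dynamics \eqref{eq:inventory-no-order} to bound the extra holding cost — obtaining policies $Q^{(M)}\in\Phi'$ with $\mathcal{C}(x,Q^{(M)})\le\mathcal{C}(x,Q)+\varepsilon(M)$, where $\varepsilon(M)\to0$ as $M\to\infty$. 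Then $\mathcal{C}(x,Q)\ge\mathcal{C}(x,Q^{(M)})-\varepsilon(M)\ge\alpha^{\star}-\varepsilon(M)$ for every $M$, and letting $M\to\infty$ completes the proof.

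I expect the comparison step to be the crux. Controlling $\mathbb{E}_{x}[w(Z(t))]/t$ for genuinely arbitrary admissible policies is delicate precisely because $w$ grows one polynomial degree faster than $h$, so finiteness of the holding cost does not by itself rule out that a clever policy lets $Z$ make rare but very large excursions (where $w$ is large) or oscillate so that the $\limsup$ in \eqref{eq:problem} fails to reflect the true long-run average; excluding this is exactly what forces one through approximating stable policies and a comparison argument rather than a direct verification. The mere lower semicontinuity and nonmonotonicity of $c$ sharpen the difficulty, since the modified policies $Q^{(M)}$ must be built so that the possibly discontinuous cost $c$ evaluated along them stays controlled — which is where subadditivity, permitting orders to be merged or split without penalty beyond $c$ itself, becomes indispensable.
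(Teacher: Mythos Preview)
Your two-stage architecture matches the paper's, but each stage has a concrete gap.

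In the verification stage you paste $w$ at $s^\star$ and appeal to ``stationarity conditions characterizing $(s^\star,S^\star)$ in Lemma~\ref{lem:sS}'' to obtain both the drift inequality on $(-\infty,s^\star]$ and the intervention inequality $w(z)\le w(z+\xi)+c(\xi)$. But Lemma~\ref{lem:sS} is purely an existence statement; with $c$ only lower semicontinuous the minimizer need satisfy no first-order condition, so there is nothing at $s^\star$ to paste against. Concretely, with a linear extension below $s^\star$ one needs $g'(z)-\alpha^\star\ell'(z)<0$ there for the drift inequality to survive, and one needs the modified ratio $\underline{\alpha}(s,S)\ge\alpha^\star$ for the intervention inequality; neither is guaranteed at $s^\star$. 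The paper resolves this by locating a strictly lower point $\underline{s}\le s^\star$ (Lemma~\ref{lem:underline-s}) at which both \eqref{eq:underline-s1} and \eqref{eq:underline-s2} can be arranged, and builds $V$ in \eqref{eq:V} with the switch at $\underline{s}$, not $s^\star$.

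In the comparison stage you truncate in the wrong direction. The test function has bounded derivative on the negative half-line---this is exactly condition~\eqref{eq:LB-condition-3}, and it is what makes the negative side harmless in the It\^o lower bound---whereas on the positive side it grows one polynomial degree faster than $h$ (Lemma~\ref{lem:V}). Hence the obstruction to $\mathbb{E}_x[w(Z(t))]/t\to0$ is large \emph{positive} excursions of $Z$, and forcing extra reorders up to $S^\star$ whenever $Z$ drops below $-M$ does nothing about those: if $Q$ places a huge order, so does your $Q^{(M)}$, and there is no reason $Q^{(M)}\in\Phi'$. The paper does the opposite, capping the post-order level at $j$ via the classes $\Phi(j)$ and the truncation rules $(\mathcal{J}1)$--$(\mathcal{J}4)$; Proposition~\ref{prop:bar-V} shows $\bar\Phi=\cup_j\Phi(j)\subseteq\Phi_V$ by coupling with a stationary upper-reflected diffusion, and Proposition~\ref{prop:comparison} controls the extra setup cost of the truncated and auxiliary orders through \eqref{eq:lim-sup-K}. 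Subadditivity of $c$ enters there not by merging orders as you suggest, but by guaranteeing that $K(\xi)=c(\xi)-k\xi$ with $k=\inf_{\xi>0}c(\xi)/\xi$ satisfies $K(\xi)/\xi\to0$, so that the per-unit-time cost of the rare truncations vanishes as $j\to\infty$.
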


Theorem \ref{thm:mainresults} will be proven by two steps. First, in Section \ref{sec:subset}, by a lower bound theorem, we show that the $(s^{\star},S^{\star})$ policy is the best one in a subset of $\Phi$. Then in Section \ref{sec:comparison}, we show its global optimality in $\Phi$ by a comparison theorem.

\section{Optimality of the (s,S) Policy in A Subset}
\label{sec:subset}

In this section, by a lower bound theorem, we show that the $(s^{\star},S^{\star})$ policy is the best one in a subset of admissible policies.
Specifically, in Proposition \ref{prop:LBT}, we show that if some function $f$ with certain properties and a constant $\alpha$ satisfy the lower bound conditions  \eqref{eq:LB-condition-1}-\eqref{eq:LB-condition-3},
then the cost under any policy in a subset $\Phi_f$ is larger than $\alpha$. We construct a function $V$ and in Proposition \ref{prop:optimal} check that $f=V$ and $\alpha=\alpha^{\star}$ satisfy all lower bound conditions.
Thus, $\alpha^{\star}=\alpha(s^{\star},S^{\star})$ is a lower bound of the cost under any $Q\in\Phi_V$, i.e., $(s^{\star},S^{\star})$ policy is optimal in $\Phi_V$. Finally, in Proposition \ref{prop:bar-V}, we show that $\Phi_V$ is large enough to include a class of admissible policies with order-up-bounds.

Let $\mathscr{A} f(z)=\frac{1}{2}\sigma^2(z) f''(z)-\mu(z)f'(z)$.
The following proposition provides a lower bound theorem.
See Proposition 2 in \cite{HeETAL17} for a similar proof.
\begin{proposition}[Lower Bound Theorem]
\label{prop:LBT}
Suppose Assumption \ref{assumption:h} holds.
Let $f$ be a real-value function with absolutely continuous $f'$, and let $\alpha$ be a positive number. If
\begin{align}
\label{eq:LB-condition-1}
\mathscr{A} f(z)+h(z)\geq \alpha\quad \text{for any $z\in\mathbb{R}$ when $f''(z)$ exists},
\end{align}
with
\begin{align}
&f(z_2)-f(z_1)\geq -c(z_2-z_1)\quad \text{for any $z_2>z_1$},\quad \text{and}\label{eq:LB-condition-2}\\
&\abs{f'(z)}<a_0\quad \text{for all $z<0$ and some positive number $a_0$},\label{eq:LB-condition-3}
\end{align}
then we have  $\mathcal{C}(x,Q)\geq \alpha$ for each $Q\in\Phi_f$ and each $x\in\mathbb{R}$,
where $\Phi_f\subset \Phi$ consists of those policies $Q$ such that their resulting inventory process $Z$ satisfying
\begin{align}
 (i) &\quad \mathbb{E}_x\Big[\int_0^t \big(f'(Z(s))\sigma(Z(s))\big)^2\,\mathrm{d}s\Big]<\infty \text{ for $t\geq0$};\label{eq:condition1}\\
 (ii)&\quad \mathbb{E}_x[\abs{f(Z(t))}]<\infty \text{ for $t\geq0$}; \text{ and}\label{eq:condition2}\\
(iii) &\quad \lim_{t\to\infty}\frac{1}{t} \mathbb{E}_x[\abs{f(Z(t))1_{\{Z(t)\geq0\}}}]=0.\label{eq:condition3}
\end{align}
\end{proposition}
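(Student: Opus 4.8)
The plan is to apply It\^o's formula to the process $f(Z(t))$ and then take expectations, exploiting the lower bound conditions \eqref{eq:LB-condition-1}--\eqref{eq:LB-condition-3} to bound the running cost from below by $\alpha t$ up to controllable error terms. First I would decompose the dynamics of $Z$ between jumps: on each interval where no order arrives, $Z$ evolves as the diffusion \eqref{eq:inventory-no-order}, so by It\^o's formula applied to $f$ (valid since $f'$ is absolutely continuous, hence $f''$ exists a.e. and $f$ is the difference of convex-type functions — one should invoke the It\^o--Tanaka or Meyer--It\^o version to be careful, but generically $f''$ a.e. suffices here),
\[
f(Z(t)) = f(x) + \int_0^t \mathscr{A}f(Z(s))\,\mathrm{d}s - \int_0^t f'(Z(s))\sigma(Z(s))\,\mathrm{d}B(s) + \sum_{n=0}^{N(t)} \big(f(Z(\tau_n)) - f(Z(\tau_n-))\big),
\]
where the last sum collects the jumps of $Z$ at the order times $\tau_n$, each jump having size $\xi_n\ge 0$.

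Next I would handle the three pieces. Condition \eqref{eq:condition1} ensures the stochastic integral is a true martingale, so its expectation vanishes. For the jump terms, since $Z(\tau_n) - Z(\tau_n-) = \xi_n$, condition \eqref{eq:LB-condition-2} gives $f(Z(\tau_n)) - f(Z(\tau_n-)) \ge -c(\xi_n)$, so the jump sum is bounded below by $-\sum_{n=0}^{N(t)} c(\xi_n)$. For the drift term, condition \eqref{eq:LB-condition-1} gives $\mathscr{A}f(Z(s)) \ge \alpha - h(Z(s))$ wherever $f''$ exists, and since this holds Lebesgue-a.e. along the path, $\int_0^t \mathscr{A}f(Z(s))\,\mathrm{d}s \ge \alpha t - \int_0^t h(Z(s))\,\mathrm{d}s$. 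Taking $\mathbb{E}_x[\cdot]$ (legitimate by \eqref{eq:condition2}) and rearranging yields
\[
\mathbb{E}_x\Big[\int_0^t h(Z(s))\,\mathrm{d}s + \sum_{n=0}^{N(t)} c(\xi_n)\Big] \ge \alpha t + f(x) - \mathbb{E}_x[f(Z(t))].
\]

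Finally I would divide by $t$, take $\limsup_{t\to\infty}$, and show the error term $\mathbb{E}_x[f(Z(t))]/t$ does not hurt. Split $f(Z(t)) = f(Z(t))1_{\{Z(t)\ge 0\}} + f(Z(t))1_{\{Z(t)<0\}}$. The nonnegative-inventory part vanishes in the $\limsup$ by condition \eqref{eq:condition3}. For the negative-inventory part, condition \eqref{eq:LB-condition-3} bounds $|f'(z)|\le a_0$ for $z<0$, so $|f(Z(t))1_{\{Z(t)<0\}}| \le |f(0)| + a_0|Z(t)|1_{\{Z(t)<0\}}$; combined with a uniform-in-$t$ bound on $\mathbb{E}_x[|Z(t)|1_{\{Z(t)<0\}}]$ (which follows from the strictly positive demand drift $\underline\mu>0$ pushing the process up, keeping the negative excursions integrable and tight — essentially the same estimate that makes $\mathbb{E}_S[\tau_S^s]<\infty$), this term is $O(1)$ and so contributes $0$ after dividing by $t$. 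Hence $\mathcal{C}(x,Q) = \limsup_{t\to\infty}\frac1t\mathbb{E}_x[\cdots] \ge \alpha$, as claimed.

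The main obstacle I expect is the rigorous justification of the It\^o expansion and the martingale/integrability bookkeeping rather than any single clever step: one must be careful that $f''$ existing only a.e. is enough (appealing to absolute continuity of $f'$ and the occupation-time/local-time machinery, or noting the set where $f''$ fails has Lebesgue measure zero and is not charged by the diffusion's occupation measure), that the local martingale is a genuine martingale under \eqref{eq:condition1}, and above all that the boundary term $\mathbb{E}_x[f(Z(t))]$ is controlled uniformly on the negative half-line — this is exactly where \eqref{eq:LB-condition-3} and the positive drift assumption in Assumption \ref{assumption:coefficients}(a) are doing the work. Since \cite{HeETAL17} carries out an analogous argument (their Proposition 2), I would mirror that structure, adjusting for the state-dependent coefficients $\mu(\cdot),\sigma(\cdot)$.
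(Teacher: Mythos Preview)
Your overall strategy---It\^o's formula on $f(Z)$, martingale via \eqref{eq:condition1}, jump bound via \eqref{eq:LB-condition-2}, drift bound via \eqref{eq:LB-condition-1}, then control of $\mathbb{E}_x[f(Z(t))]/t$---is exactly the verification argument the paper has in mind; the paper does not spell out its own proof but simply refers to Proposition~2 of \cite{HeETAL17}, and your outline reproduces that structure faithfully.

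There is, however, a genuine gap in your treatment of $\mathbb{E}_x[f(Z(t))1_{\{Z(t)<0\}}]/t$. Your claim that $\mathbb{E}_x[\,|Z(t)|1_{\{Z(t)<0\}}\,]$ is bounded uniformly in $t$ is false for general $Q\in\Phi_f$, and the heuristic you give has the sign backwards: the drift of $Z$ is $-\mu(\cdot)<0$, so positive demand pushes inventory \emph{down}, not up, and does nothing to tame negative excursions. (Concretely, the ``never order'' policy lies in $\Phi_f$, and for it $\mathbb{E}_x[Z(t)^{-}]$ grows linearly in $t$.) The correct route is to first dispose of the case $\mathcal{C}(x,Q)=\infty$ trivially; then, assuming $\mathcal{C}(x,Q)<\infty$, the convexity and growth of $h$ in Assumption~\ref{assumption:h} give a constant $c_0>0$ with $h(z)\ge c_0\,z^{-}-c_1$, so that $\limsup_{t\to\infty}\frac{1}{t}\int_0^t \mathbb{E}_x[Z(s)^{-}]\,\mathrm{d}s<\infty$. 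This time-averaged bound forces $\liminf_{t\to\infty}\mathbb{E}_x[Z(t)^{-}]/t=0$ (if the $\liminf$ were $\delta>0$ the time integral would grow like $t^2$). Combining this with \eqref{eq:LB-condition-3} and \eqref{eq:condition3} in the split you already wrote down yields $\liminf_{t\to\infty}\mathbb{E}_x[f(Z(t))]/t\le 0$, and hence $\mathcal{C}(x,Q)\ge \alpha$. You correctly flagged this boundary term as the delicate step; the missing idea is that it is the \emph{holding cost}, not the drift, that supplies the needed control.
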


We next construct a function, embodied by $V$, which together with $\alpha^{\star}=\alpha(s^{\star},S^{\star})$, satisfies all conditions in Proposition \ref{prop:LBT}.
Define
\begin{align*}
g(z)=2 \mathcal{S}'(z)\int_{z}^{\infty} h(u)\,\mathcal{M}(\mathrm{d}u)\quad \text{and}\quad \ell(z)= 2\mathcal{S}'(z)\int_{z}^{\infty}\,\mathcal{M}(\mathrm{d}u),
\end{align*}
Note that $g$ and $\ell$ satisfy
\begin{equation}
\label{eq:g-l-equations}
\frac{\sigma^2(z)}{2} g'(z)-\mu(z)g(z)+h(z)=0\quad\text{and}\quad \frac{\sigma^2(z)}{2} \ell'(z)-\mu(z)\ell(z)+1=0.
\end{equation}
\begin{lemma}
\label{lem:underline-s}
If Assumptions \ref{assumption:coefficients}-\ref{assumption:h} hold,  then there is an $\underline{s}$ with $\underline{s}\leq s^{\star}$ such that
\begin{align}
&\underline{\alpha}(s,S):= \frac{\int_s^S g(y\vee\underline{s})\,\mathrm{d}y+c(S-s)}{\int_s^S \ell(y\vee\underline{s})\,\mathrm{d}y}\geq \alpha^{\star}\quad \text{and} \label{eq:underline-s1}\\
&g'(z)-\alpha^{\star}\ell'(z)<0\quad \text{for all $z\leq \underline{s}$}.\label{eq:underline-s2}
\end{align}
\end{lemma}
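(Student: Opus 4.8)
The plan is to exhibit $\underline{s}$ explicitly in terms of the sign behaviour of $g' - \alpha^\star \ell'$ and then verify the two claims. First I would study the function $\psi(z) := g'(z) - \alpha^\star \ell'(z)$. Differentiating the two identities in \eqref{eq:g-l-equations} (or using the probabilistic representations of $g$ and $\ell$ directly) gives control of the sign of $\psi$ for $z$ very negative: as $z \to -\infty$ the coefficients $\mu, \sigma$ stabilize to $\underline\mu, $ and the ratio $h(z)/1 \to \infty$ by Remark \ref{rem:h}(b), so the ``source term'' $h(z) - \alpha^\star$ in the combined ODE for $\psi$ is eventually positive and large; this should force $g - \alpha^\star \ell$ (hence, after analyzing monotonicity, $\psi$) to be eventually negative as $z \to -\infty$. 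The natural candidate is therefore
\[
\underline{s} := \sup\{ z \le s^\star : \psi(y) \ge 0 \text{ for some } y \le z \}\wedge s^\star,
\]
or more simply the infimum over all $z$ such that $\psi < 0$ on $(-\infty, z]$; one must check this set is nonempty (from the $z\to-\infty$ analysis) and that the resulting $\underline{s}$ does not exceed $s^\star$. For the latter, I expect to use the first-order optimality of $(s^\star, S^\star)$ for $\alpha(s,S)$ from Lemma \ref{lem:sS}: the stationarity condition in the $s$-variable at $s^\star$ translates into a statement that $g(s^\star) - \alpha^\star \ell(s^\star)$ and the relevant derivative have a definite sign, which pins $\underline{s}$ below $s^\star$. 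This gives \eqref{eq:underline-s2} by construction.

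For \eqref{eq:underline-s1}, the idea is that replacing the argument $y$ by $y \vee \underline{s}$ in $g$ and $\ell$ can only help: for $y \ge \underline{s}$ nothing changes, so for $s \ge \underline{s}$ we have $\underline\alpha(s,S) = \alpha(s,S) \ge \alpha^\star$ by the very definition of $(s^\star,S^\star)$ as the minimizer (using that $\alpha(s,S) = [\int_s^S g + c(S-s)]/\int_s^S \ell$ after an integration by parts on \eqref{eq:alpha} — this rewriting is the first routine computation to record). The substantive case is $s < \underline{s}$. Here I would split the integrals at $\underline{s}$: on $[\underline{s}, S]$ the integrand of the numerator is $g$ and of the denominator is $\ell$, while on $[s, \underline{s}]$ both are frozen at the constant values $g(\underline{s})$, $\ell(\underline{s})$. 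Writing $A = \int_{\underline{s}}^S g(y)\,dy$, $B = \int_{\underline{s}}^S \ell(y)\,dy$, $\delta = \underline{s} - s \ge 0$, the claim becomes
\[
\frac{A + \delta\, g(\underline{s}) + c(S-s)}{B + \delta\, \ell(\underline{s})} \ge \alpha^\star,
\]
i.e. $A - \alpha^\star B + c(S-s) + \delta\big(g(\underline{s}) - \alpha^\star \ell(\underline{s})\big) \ge 0$.

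To close this, I would argue the two non-$c$ terms are separately controlled. Integrating \eqref{eq:underline-s2} from $\underline{s}$ down shows $g(z) - \alpha^\star \ell(z)$ is monotone in the relevant direction near $\underline{s}$; evaluating and using the boundary behaviour (as $z\to-\infty$, $g(z)-\alpha^\star\ell(z) = 2\mathcal S'(z)\int_z^\infty (h-\alpha^\star)\,d\mathcal M \to +\infty$ since $\mathcal S'(z)>0$ and the integral diverges) together with the sign at $\underline{s}$ should give $g(\underline{s}) - \alpha^\star \ell(\underline{s}) \ge 0$, killing the $\delta$-term since $\delta \ge 0$; and $A - \alpha^\star B + c(S - \underline{s}) = \big(\int_{\underline s}^S g + c(S-\underline s)\big) - \alpha^\star \int_{\underline s}^S \ell = \big(\underline\alpha(\underline{s},S) - \alpha^\star\big)\int_{\underline s}^S \ell \ge 0$ by the already-settled case $s = \underline{s}$; finally $c(S-s) \ge c(S - \underline{s})$ need not hold since $c$ is not monotone, so instead I would use subadditivity: $c(S-s) = c((S-\underline s) + \delta) \le c(S-\underline s) + c(\delta)$ is the wrong direction, so one actually wants the reverse and must be careful — the correct route is to note $c \ge 0$ and simply bound $c(S-s) \ge 0$, then show $A - \alpha^\star B \ge 0$ on its own whenever $\int_{\underline s}^S \ell$ stays positive, which follows because $g - \alpha^\star \ell \ge 0$ on all of $[\underline{s}, S]$...

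The main obstacle I anticipate is exactly this last point: showing $g(z) - \alpha^\star \ell(z) \ge 0$ for all $z \ge \underline{s}$ (not merely at $\underline s$), since $g/\ell$ is not globally monotone and the optimality of $(s^\star,S^\star)$ only gives an averaged inequality. I would resolve it by combining \eqref{eq:underline-s2} (which controls $z \le \underline s$ and, via the ODE, the derivative structure just above $\underline s$) with a convexity/single-crossing argument for $g - \alpha^\star\ell$ inherited from convexity of $h$ and monotonicity of $\mu$: one shows $z \mapsto g(z) - \alpha^\star \ell(z)$ can change sign at most once on $\mathbb R$, it is $+\infty$ at $-\infty$, and \eqref{eq:underline-s2} forces the unique sign change (if any) to occur at or below $\underline s$, so it is nonnegative throughout $[\underline s, \infty)$; this simultaneously yields $g(\underline s) - \alpha^\star\ell(\underline s) \ge 0$ and $A - \alpha^\star B \ge 0$, and the proof of \eqref{eq:underline-s1} is complete after adding $c(S-s) \ge 0$.
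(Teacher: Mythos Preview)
Your approach has a genuine gap at the point you yourself flag as the ``main obstacle''. The single-crossing claim for $z\mapsto g(z)-\alpha^\star\ell(z)$ is false in general. Since $g(z)/\ell(z)=\int_z^\infty h\,\mathrm d\mathcal M\big/\int_z^\infty \mathrm d\mathcal M$ is a tail average of $h$ against the speed measure, it is U-shaped (it tends to $+\infty$ at both ends, by L'H\^opital, and dips in the middle). Because $\alpha^\star$ carries the strictly positive setup cost $c(S^\star-s^\star)>0$ on top of such an average, one typically has $\min_z g(z)/\ell(z)<\alpha^\star$, so $g-\alpha^\star\ell$ changes sign \emph{twice}: positive for very negative $z$, negative on a middle interval, positive again for large $z$. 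Hence you cannot conclude $g-\alpha^\star\ell\ge 0$ on all of $[\underline s,\infty)$, and the inequality $A-\alpha^\star B\ge 0$ fails for $S$ in that middle region. A secondary issue: you invoke first-order optimality of $(s^\star,S^\star)$ in $s$, but $c$ is only lower semicontinuous, so stationarity conditions are not available.

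The paper's proof avoids both problems by not trying to fix $\underline s$ as the maximal level below which $\psi<0$, but instead taking $\underline s$ \emph{sufficiently negative}. For \eqref{eq:underline-s2} it shows directly that $\lim_{z\to-\infty}\big(g'(z)-\alpha^\star\ell'(z)\big)<0$ (via a decomposition of the integral representation), which yields some $\underline s_2\le s^\star$. For \eqref{eq:underline-s1} it does a three-region argument: there exist $s^\dagger\le s^\star\le s^\ddagger$ with $g/\ell\ge\alpha^\star$ on $(-\infty,s^\dagger]\cup[s^\ddagger,\infty)$; for $S$ in the compact middle interval $[s^\dagger,s^\ddagger]$ one uses $\lim_{s\to-\infty}\int_s^S g\,/\int_s^S\ell=\infty$ (uniformly over the compact range of $S$) to find $\underline s_1\le s^\dagger$ making the integrated ratio exceed $\alpha^\star$. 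Setting $\underline s=\underline s_1\wedge\underline s_2$ and splitting the integral at $\underline s$ (using only $c\ge 0$, as you correctly switched to) then handles all cases. The key conceptual difference is that the paper compensates for the middle region where $g-\alpha^\star\ell<0$ by pushing $\underline s$ far enough left that the left-tail contribution dominates, rather than asserting a pointwise sign.
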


Now we are ready to construct the function $V$ as follows.
\begin{align}
\label{eq:V}
V(z)&=\int_{\underline{s}}^z g(\max(y,\underline{s}))\,\mathrm{d}y-\alpha^{\star} \int_{\underline{s}}^z \ell(\max(y,\underline{s}))\,\mathrm{d}y
=\begin{cases}
\int_{\underline{s}}^z g(y)\,\mathrm{d}y-\alpha^{\star} \int_{\underline{s}}^z \ell(y)\,\mathrm{d}y& \text{for $z\geq \underline{s}$},\\
[g(\underline{s})-\alpha^{\star} \ell(\underline{s})](z-\underline{s})& \text{for $z<\underline{s}$}.
\end{cases}
\end{align}
Next, we show that $V$ and $\alpha^{\star}$ satisfy conditions \eqref{eq:LB-condition-1}-\eqref{eq:LB-condition-3}, and then Proposition \ref{prop:LBT} implies that $\alpha^{\star}=\alpha(s^{\star},S^{\star})\leq \mathcal{C}(x,Q)$ for $Q\in\Phi_{V}$, i.e., $(s^{\star},S^{\star})$ policy is optimal in $\Phi_{V}$.

\begin{proposition}
\label{prop:optimal}
If Assumptions \ref{assumption:coefficients}-\ref{assumption:h} hold, we have that $\mathcal{C}(x,Q)\geq \alpha^{\star}$ holds for each $Q\in\Phi_V$ and $x\in\mathbb{R}$.
\end{proposition}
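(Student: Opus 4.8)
The plan is to verify that the constructed function $V$ together with $\alpha^{\star}$ satisfies all three hypotheses \eqref{eq:LB-condition-1}--\eqref{eq:LB-condition-3} of Proposition \ref{prop:LBT}, after which the conclusion $\mathcal{C}(x,Q)\geq\alpha^{\star}$ on $\Phi_V$ is immediate. First I would record the derivatives of $V$ from \eqref{eq:V}: for $z\geq\underline{s}$ one has $V'(z)=g(z)-\alpha^{\star}\ell(z)$ and $V''(z)=g'(z)-\alpha^{\star}\ell'(z)$ wherever the latter exists, while for $z<\underline{s}$ the function $V$ is affine with slope $g(\underline{s})-\alpha^{\star}\ell(\underline{s})$, so $V''\equiv 0$ there. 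Note $V'$ is absolutely continuous because $g,\ell$ are (they are built from $\mathcal{S}'$ and the integrals of $h$ against the speed measure, which are $C^1$ under Assumptions \ref{assumption:coefficients}--\ref{assumption:h}), so the regularity requirement on $f'$ in Proposition \ref{prop:LBT} holds.

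Next I would check \eqref{eq:LB-condition-1}, i.e.\ $\mathscr{A}V(z)+h(z)\geq\alpha^{\star}$. For $z\geq\underline{s}$, using $\mathscr{A}V=\tfrac12\sigma^2 V''-\mu V'=\tfrac12\sigma^2(g'-\alpha^{\star}\ell')-\mu(g-\alpha^{\star}\ell)$ and the identities \eqref{eq:g-l-equations}, the terms telescope exactly: $\tfrac12\sigma^2 g'-\mu g=-h$ and $\tfrac12\sigma^2\ell'-\mu\ell=-1$, giving $\mathscr{A}V(z)+h(z)=-h(z)+\alpha^{\star}+h(z)=\alpha^{\star}$, so the inequality holds with equality on $[\underline{s},\infty)$. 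For $z<\underline{s}$, since $V''=0$ there, $\mathscr{A}V(z)+h(z)=-\mu(z)[g(\underline{s})-\alpha^{\star}\ell(\underline{s})]+h(z)$; here I would invoke \eqref{eq:underline-s2} at the point $\underline{s}$ together with monotonicity of $\mu$ and nonnegativity/monotonicity of $h$ on $(-\infty,0)$ to bound this below by $\alpha^{\star}$ — this is the one place some real work is needed, comparing the affine continuation against the true solution of the ODE. For \eqref{eq:LB-condition-3}, boundedness of $V'$ on $(-\infty,0)$ follows from $V'$ being \emph{constant} on $(-\infty,\underline{s})$ and continuous on $[\underline{s},0]$ (a compact interval, assuming $\underline{s}<0$, which one may arrange or handle separately), hence bounded. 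Finally \eqref{eq:LB-condition-2}, $V(z_2)-V(z_1)\geq-c(z_2-z_1)$ for $z_2>z_1$: by the definition of $\underline{\alpha}(s,S)$ in \eqref{eq:underline-s1} one has $\underline{\alpha}(z_1,z_2)\geq\alpha^{\star}$, which rearranges precisely to $\int_{z_1}^{z_2} g(y\vee\underline{s})\,\mathrm{d}y-\alpha^{\star}\int_{z_1}^{z_2}\ell(y\vee\underline{s})\,\mathrm{d}y\geq -c(z_2-z_1)$, and the left side is exactly $V(z_2)-V(z_1)$ by \eqref{eq:V}; one should also observe $\int_{z_1}^{z_2}\ell(y\vee\underline{s})\,\mathrm{d}y>0$ so the rearrangement is valid.

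The main obstacle is the verification of \eqref{eq:LB-condition-1} on the half-line $z<\underline{s}$: off the region where $V$ solves the ODE exactly, one must show that the affine extension with slope $g(\underline{s})-\alpha^{\star}\ell(\underline{s})$ still makes $-\mu(z)V'(z)+h(z)\geq\alpha^{\star}$. The idea is that \eqref{eq:underline-s2} guarantees $g-\alpha^{\star}\ell$ is decreasing near $\underline{s}$, so the true $V'$ would continue to grow as $z$ decreases; replacing it by the constant value $V'(\underline{s})$ only helps the inequality provided $\mu(z)(g(\underline{s})-\alpha^{\star}\ell(\underline{s}))$ moves the right way, which is controlled by the sign of $g(\underline{s})-\alpha^{\star}\ell(\underline{s})$ together with the monotonicity of $\mu$ and the fact that $h(z)\to\infty$ as $z\to-\infty$ (Remark \ref{rem:h}(b)). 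I would also need to double-check the integrability conditions defining $\Phi_V$ are compatible with the growth of $V$ (polynomial, by Remark \ref{rem:h}(a) and the polynomial bound on $h$), but that is bookkeeping rather than a genuine difficulty. Everything else is a direct substitution using \eqref{eq:g-l-equations} and the defining inequalities in Lemma \ref{lem:underline-s}.
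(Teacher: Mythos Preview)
Your overall strategy matches the paper's exactly: verify that $(V,\alpha^\star)$ satisfies the three hypotheses of Proposition~\ref{prop:LBT}. Your treatment of the regularity of $V'$, of \eqref{eq:LB-condition-1} on $[\underline{s},\infty)$, of \eqref{eq:LB-condition-2} via \eqref{eq:underline-s1}, and of \eqref{eq:LB-condition-3} is the same as the paper's.

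The one place you diverge is the verification of \eqref{eq:LB-condition-1} for $z<\underline{s}$, which you correctly flag as the crux but then attack with the wrong tools. You propose to use \eqref{eq:underline-s2} \emph{only at the point $\underline{s}$}, together with monotonicity of $\mu$ and $h(z)\to\infty$; none of these three ingredients is what is actually needed, and an argument built on them would be circuitous at best. The paper's device is cleaner and uses \eqref{eq:underline-s2} on the whole ray $(-\infty,\underline{s}]$: since $g'-\alpha^\star\ell'<0$ there, (i) the term $\tfrac{\sigma^2(z)}{2}[g'(z)-\alpha^\star\ell'(z)]$ is negative, and (ii) $g-\alpha^\star\ell$ is decreasing, so $g(\underline{s})-\alpha^\star\ell(\underline{s})\le g(z)-\alpha^\star\ell(z)$. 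Using $\mu(z)>0$ (not monotonicity of $\mu$) one then gets, for $z<\underline{s}$,
\[
-\mu(z)\big[g(\underline{s})-\alpha^\star\ell(\underline{s})\big]+h(z)
\;\ge\;\tfrac{\sigma^2(z)}{2}\big[g'(z)-\alpha^\star\ell'(z)\big]-\mu(z)\big[g(z)-\alpha^\star\ell(z)\big]+h(z)
\;=\;\alpha^\star,
\]
the last equality being the ODE identity \eqref{eq:g-l-equations}. So the ``real work'' you anticipated is a two-line comparison back to the exact ODE; the sign of $g(\underline{s})-\alpha^\star\ell(\underline{s})$, monotonicity of $\mu$, and $h\to\infty$ play no role.
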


\begin{proof}[Proof of Proposition \ref{prop:optimal}]
We will claim that $(V,\alpha^{\star})$ satisfies all conditions of Proposition \ref{prop:LBT}.
First, $V$ defined in \eqref{eq:V} is continuously differentiable in whole $\mathbb{R}$ and $f''$ exists except at $\underline{s}$, thus $V$ is continuously differentiable with absolutely continuous $V'$.

We next verify \eqref{eq:LB-condition-1}. From \eqref{eq:g-l-equations} and \eqref{eq:V}, we have
that for $z\geq \underline{s}$, $\mathscr{A} V(z)+h(z)=\alpha^{\star}$ holds. Further,  for $z<\underline{s}$, we have
\begin{align*}
\mathscr{A} V(z)+h(z)
&=-\mu(z)\big[g(\underline{s})-\alpha^{\star}\ell(\underline{s})\big]+h(z)\\
&\geq\frac{\sigma^2(z)}{2}\big[g'(z)-\alpha^{\star}\ell'(z)\big]-\mu(z)\big[g(z)-\alpha^{\star}\ell(z)\big]+h(z)\\
&=\alpha^{\star},
\end{align*}
where the inequaly holds due to  \eqref{eq:underline-s2}, and the last equality is derived from \eqref{eq:g-l-equations}.

Now we check \eqref{eq:LB-condition-2}.  It follows from \eqref{eq:underline-s1} that for $z_1<z_2$,
\begin{align*}
V(z_2)-V(z_1)
=\int_{z_1}^{z_2} g(\max(y,\underline{s}))\,\mathrm{d}y-\alpha^{\star} \int_{z_1}^{z_2} \ell(\max(y,\underline{s}))\,\mathrm{d}y
\geq -c(S-s).
\end{align*}

Finally, we prove \eqref{eq:LB-condition-3}. It follows from \eqref{eq:V} that
for $z<0$,
\begin{align*}
\abs{V'(z)}
<\max\{g(\underline{s})-\alpha^{\star}\ell(\underline{s}) , \max_{z\in[\underline{s},0]}(g(z)-\alpha^{\star}\ell(z))\}+1.
\end{align*}
\end{proof}

To the end, we study how large is the subset $\Phi_V$. We define another subset of admissible policies as follows and then show that it is included in $\Phi_V$. For $j\in\mathbb{N}$, let
\[
\Phi(j)=\{Q\in \Phi: Z(\tau_n)\leq j \quad \text{for all $n\geq0$}\},
\]
i.e., under $Q\in\Phi(j)$, the inventory level after ordering at any ordering time does not exceed level $j$.
Let
\[
\bar{\Phi}=\cup_{j=1}^{\infty} \Phi(j).
\]
We will show that $\bar{\Phi}\subseteq\Phi_{V}$. To achieve that, we first provide some properties of $V$ which will be used in proving Proposition \ref{prop:bar-V}.
\begin{lemma}
\label{lem:V}
If Assumptions \ref{assumption:coefficients}-\ref{assumption:h} hold, then there exist a $\bar{z}$ with $0<\bar{z}<\infty$ such that
\begin{equation}
\label{eq:bar-z}
V(z)>0\quad\text{and}\quad V'(z)>0\quad \text{for all $z\geq\bar{z}$}.
\end{equation}
Furthermore, both $V$ and $V'$ are polynomially bounded, i.e.,
\begin{equation}
\label{eq:V-polynomial}
\abs{V'(z)}\leq b_1 +b_2 \abs{z}^n\quad \text{and}\quad \abs{V(z)}\leq b_1 +b_2 \abs{z}^{n+1},
\end{equation}
for some positive constants $b_i$, $i=1,2$, and a positive integer $n$.
\end{lemma}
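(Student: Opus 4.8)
The plan is to split the claim into two parts: the eventual positivity of $V$ and $V'$ on $[\bar z,\infty)$, and the global polynomial bounds on $V$ and $V'$.

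For the polynomial bounds, the starting point is the explicit formulas $g(z)=2\mathcal{S}'(z)\int_z^\infty h(u)\,\mathcal{M}(\mathrm{d}u)$ and $\ell(z)=2\mathcal{S}'(z)\int_z^\infty \mathcal{M}(\mathrm{d}u)$, together with $V'(z)=g(z)-\alpha^\star\ell(z)$ for $z\ge\underline s$ and $V'(z)\equiv g(\underline s)-\alpha^\star\ell(\underline s)$ for $z<\underline s$. So it suffices to bound $g$ and $\ell$ polynomially on $[\underline s,\infty)$. Using Assumption \ref{assumption:coefficients}, the exponent $\int_a^y \tfrac{2\mu(v)}{\sigma^2(v)}\,\mathrm{d}v$ is squeezed between linear functions of $y$ with strictly positive slopes $2\underline\mu/\bar\sigma^2$ and $2\bar\mu/\underline\sigma^2$, so $\mathcal{S}'(z)$ grows/decays at controlled exponential rates and $\mathcal{M}(\mathrm{d}u)$ has an exponentially decaying density at $+\infty$. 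Combining these, the Gaussian-type tail integrals $\int_z^\infty \mathcal{M}(\mathrm{d}u)$ and $\int_z^\infty h(u)\,\mathcal{M}(\mathrm{d}u)$ (finite by Remark \ref{rem:h}(a), and using that $h$ is polynomially bounded by Assumption \ref{assumption:h}) decay exponentially fast enough to cancel the growth of $\mathcal{S}'(z)$; in fact $g(z),\ell(z)\to 0$ as $z\to\infty$, so $g$ and $\ell$ are bounded on $[\underline s,\infty)$, hence trivially polynomially bounded, and $V'$ is bounded on all of $\mathbb{R}$. Integrating, $\abs{V(z)}\le \abs{V(\underline s)}+(\sup\abs{V'})\,\abs{z-\underline s}$ gives the linear — hence polynomial — bound on $V$. (The integer $n$ in the statement can be taken to be $1$, or left general as the paper prefers.)

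For the eventual positivity, I would argue as follows. From \eqref{eq:g-l-equations}, $V''(z)=g'(z)-\alpha^\star\ell'(z)=\tfrac{2}{\sigma^2(z)}\big(\mu(z)[g(z)-\alpha^\star\ell(z)]-h(z)+\alpha^\star\big)=\tfrac{2}{\sigma^2(z)}\big(\mu(z)V'(z)+\alpha^\star-h(z)\big)$ for $z\ge\underline s$. Since $h(z)\to\infty$ as $z\to\infty$ (Remark \ref{rem:h}(b)) while $\mu(z)V'(z)$ stays bounded, we get $V''(z)<0$ for all large $z$, say $z\ge z_1$. Now I claim $V'$ must become and stay positive: suppose instead $V'(z_2)\le 0$ for some $z_2\ge z_1$; then by $V''<0$ on $[z_1,\infty)$, $V'$ is strictly decreasing there, so $V'(z)\le V'(z_2)-\delta$ for $z$ large, forcing $V(z)\to-\infty$, which contradicts $V'\to 0$ as $z\to\infty$ (established above, since $g,\ell\to0$) — more directly, $V'\to 0$ and $V'$ eventually monotone decreasing forces $V'\ge 0$ eventually. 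Actually the cleanest route: $V'(z)=g(z)-\alpha^\star\ell(z)\to 0$ and $V''<0$ eventually together imply $V'>0$ eventually (a function decreasing to a limit stays above that limit). Once $V'>0$ on $[\bar z_0,\infty)$, positivity of $V$ follows because $V$ is then strictly increasing there and, by the optimality characterization of $(s^\star,S^\star)$ in Lemma \ref{lem:sS} combined with Lemma \ref{lem:underline-s}, $V(S^\star)\ge 0$ (or one shows directly $V$ attains a nonnegative value); increasing past a nonnegative value keeps it positive. Taking $\bar z$ past both thresholds and past $S^\star$ gives \eqref{eq:bar-z}.

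The main obstacle is the asymptotic analysis of $g$ and $\ell$ at $+\infty$: one must carefully estimate the competing exponential factors in $\mathcal{S}'(z)\int_z^\infty \mathcal{M}(\mathrm{d}u)$ and show the product tends to $0$ (and controls $\int_z^\infty h\,\mathcal{M}$ similarly, using polynomial boundedness of $h$). This is where Assumption \ref{assumption:coefficients}'s uniform positive bounds on $\mu$ and $\sigma$ are essential, and it is a standard but slightly delicate Laplace/Watson-type tail estimate; everything else (the ODE manipulation, the monotonicity argument for positivity, the integration for the $V$-bound) is routine once these tail asymptotics are in hand.
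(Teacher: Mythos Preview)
Your tail analysis at $+\infty$ is incorrect, and the error propagates through both parts of your argument. Write
\[
g(z)=2\int_0^{\infty}\frac{h(z+s)}{\sigma^2(z+s)}\exp\Big(-\int_z^{z+s}\frac{2\mu(v)}{\sigma^2(v)}\,\mathrm{d}v\Big)\,\mathrm{d}s,
\qquad
\ell(z)=2\int_0^{\infty}\frac{1}{\sigma^2(z+s)}\exp\Big(-\int_z^{z+s}\frac{2\mu(v)}{\sigma^2(v)}\,\mathrm{d}v\Big)\,\mathrm{d}s.
\]
Using Assumption~\ref{assumption:coefficients} one gets $\ell(z)\in\big[\underline{\sigma}^2/(\bar\mu\bar\sigma^2),\,\bar\sigma^2/(\underline\mu\underline\sigma^2)\big]$ for all $z$, and $g(z)\ge \tfrac{\underline\sigma^2}{\bar\mu\bar\sigma^2}\,h(z)\to\infty$ as $z\to\infty$. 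So $g(z)\not\to 0$ and $\ell(z)\not\to 0$; in fact $V'(z)=g(z)-\alpha^\star\ell(z)\to\infty$, exactly as the paper shows. Your claim that the exponential decay of $\int_z^\infty\mathcal{M}(\mathrm{d}u)$ ``overshoots'' the growth of $\mathcal{S}'(z)$ fails because the two exponentials are built from the \emph{same} integrand $2\mu/\sigma^2$ and cancel exactly, leaving a positive bounded remainder for $\ell$ and an $h$-sized remainder for $g$.

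Consequently your positivity argument collapses: $V'$ does not tend to $0$, so the ``decreasing to a limit'' reasoning is vacuous; and $\mu(z)V'(z)$ is not bounded, so you cannot conclude $V''(z)<0$ eventually from $h(z)\to\infty$ alone. Likewise your polynomial-bound argument asserts $V'$ is bounded on $\mathbb{R}$, which is false. The paper's route is much shorter: from $g(z)\to\infty$ and $g(z)/\ell(z)\to\infty$ (already established in \eqref{eq:lim-g/l}) one gets $V'(z)\to\infty$, hence $V'>0$ and $V\to\infty$ eventually, giving \eqref{eq:bar-z}; the polynomial bound on $V'$ then follows from the upper estimate $g(z)\le C_1+C_2\,h(z)$ obtained from the same representation above, together with the polynomial boundedness of $h$.
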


\begin{proposition}
\label{prop:bar-V}
If Assumptions \ref{assumption:coefficients}-\ref{assumption:h} hold, then $\bar{\Phi}\subseteq \Phi_V$.
\end{proposition}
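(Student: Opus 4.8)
The plan is to show that any $Q\in\Phi(j)$ lies in $\Phi_V$ by verifying conditions \eqref{eq:condition1}–\eqref{eq:condition3}. The key observation is that under $Q\in\Phi(j)$, the inventory level is bounded above in a controlled way: after each order the level is at most $j$, and between orders the process only decreases (via $-D$), so $Z(t)\le \max(x,j)$ for all $t\ge 0$. This gives a deterministic upper bound on $Z$, which is exactly what is needed to control $V$ and $V'$ on the relevant side. On the downside there is no deterministic bound, but the process $Z$ (when not ordering) is dominated pathwise by the uncontrolled diffusion $X$ started at the same point, whose negative excursions have moments of all orders because of the strictly positive drift $\underline{\mu}>0$ and bounded $\sigma$ — in particular $\mathbb{E}_x[\abs{X(t)}^m]<\infty$ for every $m$ and every $t$, uniformly bounded in a way controlled by $t$. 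I would make this pathwise domination precise via a comparison argument: since orders only push $Z$ up, $Z(t)\ge \tilde X(t)$ where $\tilde X$ is the solution of \eqref{eq:inventory-no-order} driven by the same Brownian motion from the same initial point, and one can further bound $\tilde X$ below by an explicit Brownian-motion-with-drift (using $\mu\ge\underline{\mu}$ and $\sigma\le\bar\sigma$).

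With these two bounds in hand — $Z(t)\le C(x,j):=\max(x,j)$ and $Z(t)\ge$ (a drifted Brownian motion with all moments) — the three conditions follow from the polynomial bounds in Lemma \ref{lem:V}. For \eqref{eq:condition2}: $\abs{V(Z(t))}\le b_1+b_2\abs{Z(t)}^{n+1}\le b_1+b_2(C(x,j)^{n+1}+\abs{\tilde X(t)}^{n+1})$, and the right side has finite expectation. For \eqref{eq:condition1}: $(V'(Z(s))\sigma(Z(s)))^2\le \bar\sigma^2(b_1+b_2\abs{Z(s)}^n)^2$, again polynomially bounded in $\abs{Z(s)}$, hence integrable in $s$ over $[0,t]$ and in $\mathbb{P}$ after taking expectations and using Fubini (the moment bounds on $\tilde X(s)$ can be taken locally bounded in $s$). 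For \eqref{eq:condition3}: on the event $\{Z(t)\ge 0\}$ we have $0\le Z(t)\le C(x,j)$, so $\abs{V(Z(t))1_{\{Z(t)\ge0\}}}\le \max_{0\le z\le C(x,j)}\abs{V(z)}$, a deterministic constant; dividing by $t$ and letting $t\to\infty$ gives the limit $0$. Note this last step is where the $1_{\{Z(t)\ge0\}}$ in \eqref{eq:condition3} does the essential work — without the upper bound from $\Phi(j)$, $V(Z(t))$ could grow, but on the nonnegative side it is bounded.

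I would organize the write-up as: (1) fix $j$ and $Q\in\Phi(j)$, establish the deterministic upper bound $Z(t)\le\max(x,j)$ from admissibility and the structure of $\Phi(j)$; (2) establish the pathwise lower bound $Z(t)\ge\tilde X(t)$ and recall/derive that $\mathbb{E}_x[\sup_{s\le t}\abs{\tilde X(s)}^m]<\infty$ for all $m$ (this is standard for SDEs with bounded coefficients, or can be read off from an explicit drifted-Brownian minorant); (3) invoke Lemma \ref{lem:V} and run the three verifications above.

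The main obstacle I anticipate is step (2): making the pathwise comparison fully rigorous when $Z$ has jumps (the orders $\xi_n$). One must argue that inserting upward jumps into the dynamics of $X$ can only raise the path, which is intuitively clear but needs a clean statement — e.g. by writing $Z(t)=\tilde X(t)+(\text{a nonnegative process})$ is \emph{not} literally true because the drift and diffusion coefficients are state-dependent and evaluated along different paths; instead one needs a genuine comparison theorem for the SDEs, using monotonicity of the coefficients or a Gronwall-type estimate on $Z(t)-\tilde X(t)$ between jump times together with the fact that jumps of $Q$ only increase this difference. Alternatively, and perhaps more cleanly, one can bypass pathwise comparison and bound the moments of $Z(t)$ directly: writing $Z(t)=\max(x,j)-(D(t)-D(\sigma_t))$ where $\sigma_t$ is the last ordering time before $t$, and then $D(t)-D(\sigma_t)\le \bar\mu\, t+\sup_{0\le r\le s\le t}\abs{\int_r^s\sigma(Z(u))\,dB(u)}$, whose $m$-th moment is finite by Burkholder–Davis–Gundy and $\sigma\le\bar\sigma$. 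I would present whichever of these is shortest; the BDG route avoids comparison theorems entirely and is probably the safer choice.
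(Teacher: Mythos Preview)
Your proposal contains a genuine gap at its very first step. The claim that ``between orders the process only decreases (via $-D$), so $Z(t)\le \max(x,j)$'' is false: the demand $D$ is a diffusion, not a monotone process. Between two consecutive ordering times the inventory evolves as $\mathrm{d}Z = -\mu(Z)\,\mathrm{d}t - \sigma(Z)\,\mathrm{d}B$, and the Brownian term can push $Z$ upward. Hence $Z$ can and will exceed $\max(x,j)$ with positive probability, and your deterministic upper bound $C(x,j)$ does not exist. This undermines your verification of \eqref{eq:condition3} directly: on $\{Z(t)\ge 0\}$ there is no finite constant bounding $\abs{V(Z(t))}$, so the one-line argument you give collapses.

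Your fallback BDG route is also insufficient for \eqref{eq:condition3}. Writing $Z(t)=Z(\sigma_t)-(D(t)-D(\sigma_t))$ and bounding the martingale part by BDG yields $\mathbb{E}_x[\abs{Z(t)}^m]\le C_m(1+t^{m/2})$, since the quadratic variation over $[\sigma_t,t]$ is at most $\bar\sigma^2 t$. Plugging into \eqref{eq:V-polynomial} gives $\mathbb{E}_x[\abs{V(Z(t))}1_{\{Z(t)\ge 0\}}]\le C(1+t^{(n+1)/2})$, which does \emph{not} vanish after dividing by $t$. What \eqref{eq:condition3} requires is a bound on $\mathbb{E}_x[\abs{V(Z(t))}1_{\{Z(t)\ge 0\}}]$ that is uniform in $t$, and neither of your two mechanisms delivers that.

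The paper's proof supplies exactly this missing ingredient: it dominates $Z$ pathwise from above by a reflected diffusion $\bar Z_b$ at a lower barrier $z_b\ge\max(\bar z,j,x)$, started from its stationary distribution $\pi$. Stationarity makes $\mathbb{E}[f(\bar Z_b(t))]=\int f\,\pi$ \emph{constant in $t$} for any polynomially bounded $f$, which is precisely the uniform-in-$t$ control needed for \eqref{eq:condition3}. The comparison $Z(t)\le \bar Z_b(t)$ holds because at every ordering time $Z$ is reset to a level $\le j\le z_b\le \bar Z_b$, and between orders the two processes satisfy the same SDE, so once $Z$ touches $\bar Z_b$ they coalesce until the next order. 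This stochastic (not deterministic) upper bound is the idea you are missing.
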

\begin{proof}[Proof of Proposition \ref{prop:bar-V}]
For any given $Q\in\bar{\Phi}$, i.e., $Q\in\Phi_j$ for some $j$, we need to show that the controlled process $Z=\{Z(t)\}_{t\geq0}$ under $Q$ as well as function $V$ defined in \eqref{eq:V} satisfy conditions \eqref{eq:condition1}-\eqref{eq:condition3}.

Let $z_b=\bar{z}\vee j \vee x$ ($x$ is the initial level) and  $Z_b=\{Z_b(t)\}_{t\geq0}$ be the reflected process with lower barrier $z_b$ and any initial level $z\in[z_b,\infty)$, then it follows from Remark 3.3 in \cite{HelmesETAL17} that $Z_b$ has a stationary distribution with density
\begin{align}
\label{eq:pi}
\pi(z)=0 \text{ for $z<z_b$}\quad\text{and}\quad
\pi(z)=\frac{\frac{1}{\sigma^2(z)}\exp(-\int_{z_b}^z \frac{2\mu(u)}{\sigma^2(u)}\,\mathrm{d}u)}
{\int_{z_b}^{\infty} \frac{1}{\sigma^2(z)}\exp(-\int_{z_b}^z \frac{2\mu(u)}{\sigma^2(u)}\,\mathrm{d}u)\,\mathrm{d}z}
\quad \text{for $z\geq z_b$}.
\end{align}
Note that the boundedness of $\mu$ and $\sigma$ in Assumption \ref{assumption:coefficients} implies that
\begin{equation}
\label{eq:int-f<infty}
\int_{\bar{z}}^{\infty}f(z)\pi(z)\,\mathrm{d}z<\infty\quad \text{for any polynomially bounded function $f$}.
\end{equation}
Denote $\bar{Z}_b=\{\bar{Z}_b(t)\}_{t\geq0}$ as the reflected process with lower barrier $z_b$ and a initial level given by a random variable with distribution \eqref{eq:pi}. Then, for any $t\geq 0$, $\bar{Z}_b(t)$ has the same distribution with density \eqref{eq:pi}.
We next show
\begin{equation}
\label{eq:Z<bar-Zb}
Z(t)\leq \bar{Z}_b(t)\quad \text{a.s. for any $t\geq0$}.
\end{equation}
At time zero, it follows from $z_b=\bar{z}\vee j \vee x$ that $Z(0-)=x\leq z_b\leq \bar{Z}_b(0)$ a.s.. Also, at any ordering time $\tau$ of $Z$, $z_b\geq j$ and $Q\in\Phi_j$ imply that $Z(\tau)\leq j\leq z_b\leq \bar{Z}_b(\tau)$ a.s.. Furthermore, during any two successive ordering times, the process $Z$ cannot move above $\bar{Z}_b$ through diffusion on each sample path since once $Z$ and $\bar{Z}_b$ become same at certain time, they will keep same thereafter until the next ordering time. Thus, \eqref{eq:Z<bar-Zb} holds.

We first prove \eqref{eq:condition1}. In fact, we have
\begin{align*}
\mathbb{E}_x\Big[\int_0^t \big(V'(Z(s))\sigma(Z(s))\big)^2\,\mathrm{d}s\Big]
&\leq \bar{\sigma}^2 \mathbb{E}_x\Big[\int_0^t\big(V'(Z(s))\big)^2\big(1_{\{Z(s)<\underline{s}\}}+1_{\{\underline{s}\leq Z(s)<\bar{z}\}}+1_{\{Z(s)\geq \bar{z}\}}\big)\,\mathrm{d}s \Big].
\end{align*}
It follows from \eqref{eq:V} that the first two terms are finite. For the last term, we have
\begin{align*}
\mathbb{E}_x\Big[\int_0^t\big(V'(Z(s))\big)^21_{\{Z(s)\geq \bar{z}\}}\,\mathrm{d}s\Big]
&\leq \mathbb{E}_x\Big[\int_0^t\big(b_1+b_2\abs{Z(s)}^n\big)^21_{\{Z(s)\geq \bar{z}\}}\,\mathrm{d}s\Big]\\
&\leq \mathbb{E}_x\Big[\int_0^t\big(b_1+b_2\bar{Z}_b(s)^n\big)^2\,\mathrm{d}s\Big]\\
&= t\cdot\int_{z_b}^{\infty}\big(b_1+b_2z^n\big)^2\pi(z)\,\mathrm{d}z\\
&<\infty,
\end{align*}
where the first inequality is from \eqref{eq:bar-z}-\eqref{eq:V-polynomial}, the second inequality is from \eqref{eq:Z<bar-Zb} and $\bar{Z}_b(t)\geq z_b\geq \bar{z}$ a.s., and the equality holds because $\bar{Z}_b(t)$ has the same distribution with density \eqref{eq:pi} for any $t\geq0$.
Therefore, we have proven \eqref{eq:condition1}.

We next prove \eqref{eq:condition3}. We have
\begin{align}
\mathbb{E}_x\big[\abs{V(Z(t))}1_{\{Z(t)\geq0\}}\big]
&=\mathbb{E}_x\big[\abs{V(Z(t))}1_{\{0\leq Z(t)<\bar{z}\}}\big]
+\mathbb{E}_x\big[V(Z(t))1_{\{Z(t)\geq\bar{z}\}}\big]\nonumber\\
&\leq \max_{z\in[0,\bar{z}]}V(z)
+\mathbb{E}_x\big[V(\bar{Z}_b(t))\big]
\label{eq:EV<constant}
\end{align}
where the inequality holds due to \eqref{eq:bar-z} and $\bar{Z}_b(t)\geq z_b\geq \bar{z}$ a.s..
Note that $\mathbb{E}_x\big[V(\bar{Z}_b(t))\big]=\int_{z_b}^{\infty} V(z)\pi(z)\,\mathrm{d}z$,
thus it follows from  \eqref{eq:V-polynomial} and \eqref{eq:int-f<infty} that the right side of \eqref{eq:EV<constant} is a finite constant and independent of $t$. Thus we obtain \eqref{eq:condition3}.

Finally, we prove \eqref{eq:condition2}. We first notice
\begin{align*}
\abs{V(Z(t))}&=\abs{V(Z(t))}\big(1_{\{Z(t)<\underline{s}\}}+1_{\{\underline{s}\leq Z(t)< 0\}}+1_{\{Z(t)\geq 0\}}\big).
\end{align*}
The definition of $V$ in \eqref{eq:V} implies that the first two terms are finite, and \eqref{eq:EV<constant} implies that the last term is also finite.
Thus,  \eqref{eq:condition2} holds.
\end{proof}

\section{Proof of Theorem \ref{thm:mainresults}}
\label{sec:comparison}

We, in this section, will prove that the $(s^{\star},S^{\star})$ policy is optimal among all admissible policies (i.e., Theorem \ref{thm:mainresults}) by a comparison theorem. Specifically,  for any admissible policy $Q\in\Phi$, if we can find a sequence $\{Q_j\in\Phi(j)\subseteq\bar{\Phi}:j=1,2,\cdots\}$ satisfying
\begin{equation}
\label{eq:lim-AC}
\limsup_{j\to\infty}\mathcal{C}(x,Q_j)\leq \mathcal{C}(x,Q)\quad \text{for $x\in\mathbb{R}$,}
\end{equation}
then the optimal policy in $\bar{\Phi}=\cup_{j=1}^{\infty} \Phi(j)$ must be optimal in $\Phi$.
From Propositions \ref{prop:optimal} and \ref{prop:bar-V}, we have proven that the $(s^{\star},S^{\star})$ policy defined in \eqref{eq:sS-star} is the best one in $\bar{\Phi}$. To eventually establish the global optimality of the $(s^{\star},S^{\star})$ policy in $\Phi$, what remains is to construct a sequence of $\{Q_j\in\Phi(j):j=1,2,\cdots\}$ for each $Q\in\Phi$ and prove \eqref{eq:lim-AC}.

For any given admissible policy $Q\in \Phi(j)$ (with $Z$ as the controlled inventory process under policy $Q$), the construction of the sequence of policies $\{Q_j\in\Phi(j):j=1,2,\cdots\}$ is same as that in \cite{HeETAL17}. However, a more general argument is required to tackle the technical issues arising from the general diffusion demand process.
Let $Q_j(t)$ denote the total order amount of policy $Q_j$ in $[0,t]$, and $Z_j=\{Z_j(t):t\geq0\}$ be the resulting inventory process under $Y_j$, i.e.,
\begin{equation}
\label{eq:Zm}
Z_j(t)=x-D_j(t)+Q_j(t),\quad t\geq0,
\end{equation}
where $D_j(t)=\int_0^t \mu(Z_j(s))\,\mathrm{d}s+\int_0^t \sigma(Z_j(s))\,\mathrm{d}B(s)$.
We define the jumps of $Q_j$ as follows; see  \cite{HeETAL17}.
 \begin{itemize}
 \item[$(\mathcal{J}1)$]  $\Delta Q_j(t)=0$ for $t$ satisfying $\Delta Q(t)> 0$ and $Z_j(t-)>j/2$;
 \item[$(\mathcal{J}2)$]  $\Delta Q_j(t)=\Delta Q(t)$ for $t$ satisfying $\Delta Q(t)> 0$, $Z_j(t-)\leq j/2$, and $Z_j(t-)+\Delta Q(t)\leq j$;
 \item[$(\mathcal{J}3)$]  $\Delta Q_j(t)=j-Z_j(t-)$ for $t$ satisfying $\Delta Q(t)> 0$, $Z_j(t-)\leq j/2$, and $Z_j(t-)+\Delta Q(t)> j$;
 \item[$(\mathcal{J}4)$]  $\Delta Q_j(t)=\max(\min(Z(t), j),0)$ for $t$ satisfying $Z_j(t-)= 0$.
 \end{itemize}

 \begin{proposition}[Comparison Theorem]
 \label{prop:comparison}
 Suppose Assumption \ref{assumption:coefficients}-\ref{assumption:h} hold. For any admissible policy $Q\in\Phi$, the policy sequence $\{Q_j\in\Phi_j:j=1,2,\cdots\}$ constructed by $(\mathcal{J}1)$-$(\mathcal{J}4)$ satisfies \eqref{eq:lim-AC}.
 \end{proposition}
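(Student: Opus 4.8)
The plan is to compare the controlled process $Z_j$ under $Q_j$ with the original process $Z$ under $Q$, show that the "pushing down'' modifications $(\mathcal{J}1)$–$(\mathcal{J}4)$ only decrease (asymptotically) the holding/shortage cost and the ordering cost along each cycle, and then pass to the limit $j\to\infty$. The key structural fact to establish first is a \emph{pathwise sandwich}: for each $j$ the construction keeps $Z_j$ below $Z$ wherever the extra barrier at level $j$ is active, while $(\mathcal{J}4)$ guarantees $Z_j$ never undershoots $0$ by more than $Z$ does, so that $Z_j(t)\le \max(Z(t),0)$ modulo the behavior near $0$; more precisely, between consecutive "contact'' times the two diffusions are driven by the same Brownian motion $B$, and by the comparison theorem for one-dimensional SDEs with the Lipschitz coefficients $\mu,\sigma$ (Assumption \ref{assumption:coefficients}), if $Z_j(t_0-)\le Z(t_0-)$ at a contact time then $Z_j(t)\le Z(t)$ until the next jump. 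I would isolate this as the first lemma of the proof.

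Second, I would \textbf{bound the holding cost}. Since $h$ is convex with $h(0)=0$ and is monotone on each side of $0$ (Assumption \ref{assumption:h}), the pathwise inequality $|Z_j(t)|\le \max(|Z(t)|, \text{const})$ — where the constant accounts for the excursions below $0$ controlled by $(\mathcal{J}4)$, and for the fact that $Z_j\le j$ always — yields $h(Z_j(t))\le h(Z(t)) + C$ on the set where $Z(t)\ge 0$, and on $\{Z(t)<0\}$ we have $Z_j(t)$ trapped in a bounded region so $h(Z_j(t))$ is bounded. Hence $\frac1t\mathbb{E}_x[\int_0^t h(Z_j(u))\,du] \le \frac1t\mathbb{E}_x[\int_0^t h(Z(u))\,du] + o(1)$ uniformly in $j$; the subtlety is that the additive constant must not depend on $j$, which is where one uses that $(\mathcal{J}4)$ caps the post-order level by $\max(\min(Z(t),j),0)\le Z(t)^+$ independently of $j$.

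Third, I would \textbf{bound the ordering cost}. By $(\mathcal{J}2)$ the order sizes under $Q_j$ coincide with those under $Q$ whenever the cap is not binding; by $(\mathcal{J}1)$ some orders are deleted, which only helps since $c\ge 0$; by $(\mathcal{J}3)$ an order of size $\Delta Q(t)$ is replaced by a smaller order of size $j-Z_j(t-) < \Delta Q(t)$, and by $(\mathcal{J}4)$ new orders of size at most $j$ are inserted at the times $Z_j$ hits $0$. Subadditivity of $c$ lets me charge each replaced/inserted order against a genuine order of $Q$ or against a boundary crossing, but the inserted $(\mathcal{J}4)$ orders are the genuinely new expense; I would argue that the expected number of times $Z_j$ hits $0$ per unit time is bounded (using the positive drift $\underline\mu>0$ pushing the uncontrolled diffusion downward through $0$, so hitting $0$ from above is, if anything, rare in the long run, and in fact under $Q$ the process $Z$ already must be "topped up'' comparably often), hence the per-unit-time cost of these extra orders is $O(1)$ and, crucially, vanishes relative to $\mathcal{C}(x,Q)$ once divided by $t$ — or, more carefully, is dominated by the ordering cost already incurred by $Q$. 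Then $\limsup_{t}\frac1t\mathbb{E}_x[\sum_{n\le N_j(t)}c(\xi_n^j)] \le \limsup_t \frac1t\mathbb{E}_x[\sum_{n\le N(t)}c(\xi_n)] + o_j(1)$.

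\textbf{The main obstacle} I anticipate is controlling the $(\mathcal{J}4)$ orders — the ones inserted when $Z_j$ touches $0$ — because these have no counterpart in $Q$ and cost at least $c(0+)>0$ each: one must show their long-run frequency (times $c$ evaluated at sizes up to $j$) stays bounded and ultimately negligible after dividing by $t$, and that their cost does not blow up with $j$ even though $c$ need not be monotone, using only subadditivity together with the polynomial boundedness of $h$ and the ergodic estimate \eqref{eq:int-f<infty} for the reflected dominating process. I would handle this by comparing with the reflected-at-$z_b$ process $\bar Z_b$ from Proposition \ref{prop:bar-V}, whose stationary law gives an explicit $O(1)$ bound on the expected local time at $0$, hence on the expected number of $(\mathcal{J}4)$ orders per unit time; combining the three bounds and sending first $t\to\infty$ and then $j\to\infty$ yields \eqref{eq:lim-AC}.
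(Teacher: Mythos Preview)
Your pathwise comparison can be sharpened: the construction actually gives $Z_j(t)\le Z(t)$ whenever $Z_j(t)\ge 0$ and $Z_j(t)=Z(t)$ whenever $Z_j(t)<0$, so $h(Z_j(t))\le h(Z(t))$ for every $t$ with no additive constant. The real gap, however, is in the ordering-cost comparison, where two ingredients are missing. First, you need to split $c(\xi)=k\xi+K(\xi)$ with $k:=\inf_{\xi>0}c(\xi)/\xi=\lim_{\xi\to\infty}c(\xi)/\xi$ (the limit exists by subadditivity) and $K\ge0$, $K(\xi)/\xi\to0$. Subadditivity alone does not let you ``charge'' a $(\mathcal{J}3)$ order against the corresponding $Q$ order: since $c$ is not monotone, $c(j-Z_j(t-))$ need not be $\le c(\Delta Q(t))$, and the subadditive inequality $c(\Delta Q(t))\le c(j-Z_j(t-))+c(\Delta Q(t)-(j-Z_j(t-)))$ goes the wrong way. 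With the split, the proportional part is handled globally via $\limsup_t \mathbb{E}_x[Q_j(t)]/t\le \limsup_t \mathbb{E}_x[Q(t)]/t$ (which follows from $Z_j\le Z$, the monotonicity of $\mu$ giving $\mathbb{E}_x[D_j(t)]\le \mathbb{E}_x[D(t)]$, and a contradiction argument), while the setup part of each $(\mathcal{J}3)$/$(\mathcal{J}4)$ order is at most $\sup_{\xi\in[0,j]}K(\xi)$.

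Second, an $O(1)$ bound on the per-unit-time number of extra $(\mathcal{J}3)$/$(\mathcal{J}4)$ orders is insufficient: each such order has size in $[j/2,j]$, so even its setup part can grow with $j$, and you must show that the product of frequency and cost tends to $0$ as $j\to\infty$. The correct estimate is $O(1/j)$ for the frequency: between any two consecutive $(\mathcal{J}3)$ orders, and likewise between any two consecutive $(\mathcal{J}4)$ orders, $Z_j$ must descend through an interval of length at least $j/2$ without jumping, and by the hitting-time formula \eqref{eq:E-h-tau} together with Assumption~\ref{assumption:coefficients} this takes expected time at least $\underline{\sigma}^2 j/(2\bar\mu\bar\sigma^2)$. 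The extra setup cost per unit time is therefore at most a constant times $\sup_{\xi\in[0,j]}K(\xi)/j$, which one then checks tends to $0$. Your proposed route via $\bar Z_b$ and local time at $0$ cannot deliver this: $\bar Z_b$ lives above $z_b\ge j$ and never visits $0$, so it carries no information about how often $Z_j$ hits $0$.
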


\begin{proof}[Proof of Proposition \ref{prop:comparison}]
To prove \eqref{eq:lim-AC}, we need to compare the holding/shortage cost and ordering cost under $Q$ and $\{Q_j\in\Phi(j):j=1,2,\cdots\}$.

Consider the holding/shortage cost. It follows from the construction of $Q_j$ by $(\mathcal{J}1)$-$(\mathcal{J}4)$, we can easily have that on each sample path,
\begin{align}
\label{eq:Zj<Z}
Z_j(t)\leq Z(t) \text{ if $Z_j(t)\geq 0$}\quad \text{and}\quad Z_j(t)= Z(t)  \text{ if $Z_j(t)< 0$.}
\end{align}
By \eqref{eq:Zj<Z} and the properties of holding/shortage cost function $h$ in Assumption \ref{assumption:h}, we have that the holding/shortage cost incurred under $Q_j$ is no greater than that under $Q$.


Consider the ordering cost. We first show some properties of function $c$.
Since $c$ is a subadditive function in $\mathbb{R}_+$,  the limit $\lim_{\xi\to\infty}c(\xi)/\xi$ must exist and $\lim_{\xi\to\infty}c(\xi)/\xi=\inf_{\xi>0}c(\xi)/\xi$  (cf. Theorem 16.2.9 in \cite{Kuczma09book}). Let
\[
k:=\inf_{\xi>0}\frac{c(\xi)}{\xi}\quad \text{and}\quad K(\xi):=c(\xi)-k\xi.
\]
Then we have
\begin{equation}
\label{eq:K}
K(\xi)\geq0\quad \text{and}\quad \lim_{\xi\to\infty}\frac{K(\xi)}{\xi}=0.
\end{equation}
Thus, $k$ can be treated as the proportional cost and $K(\xi)$ as the setup cost for an order with quantity $\xi$, and the cumulative ordering cost up to time $t$ under $Q$ can be rewritten as
$\sum_{n=0}^{N(t)}K(\xi_n)+k Q(t)$.

We next consider the proportional cost.
The cumulative proportional costs up to time $t$ under $Q$ and $Q_j$ are
$kQ(t)$ and $kQ_j(t)$, respectively. We claim that for any $j=1,2,\cdots$,
\begin{equation}
\label{prop cost}
\limsup_{t\to\infty}\mathbb{E}_x[Q(t)]/t\geq \limsup_{t\to\infty}\mathbb{E}_x[Q_j(t)]/t.
\end{equation}
Suppose \eqref{prop cost} does not hold, i.e.,
\begin{equation}
\label{eq:contradiction}
a:=\limsup_{t\to\infty}\mathbb{E}_x[Q(t)]/t< \limsup_{t\to\infty}\mathbb{E}_x[Q_j(t)]/t:=b,
\end{equation}
which, implies that we can find a subsequence of ordering times $\{\theta_n\}_{n \geq 1}$ satisfying
\begin{equation}
\label{eq:b-big}
\lim_{n\to\infty}\mathbb{E}_x[Q_j(\theta_n)]/\theta_n=b.
\end{equation}
For this subsequence, we have
\begin{equation}
\label{eq:a-big}
\limsup_{n\to\infty}\mathbb{E}_x[Q(\theta_n)]/\theta_n\leq \limsup_{t\to\infty}\mathbb{E}_x[Q(t)]/t=a.
\end{equation}
Thus, it follows from \eqref{eq:contradiction}-\eqref{eq:a-big} that there must exists a $\bar{n}$ such that
\begin{equation}
\label{eq:EQ}
\mathbb{E}_x[Q(\theta_n)]< \mathbb{E}_x[Q_j(\theta_n)]\quad \text{for all $n\geq\bar{n}$}.
\end{equation}
Moreover, from \eqref{eq:Zj<Z} and the fact that $\mu(\cdot)$ is non-decreasing (see Assumption \ref{assumption:coefficients}(a)), we have
\begin{equation}
\label{eq:demand_compar}
\mathbb{E}_x[D(t)]\geq \mathbb{E}_x[D_j(t)], \quad \text{for all $t\geq0$},
\end{equation}
where $D_j(t)=\int_0^t \mu(Z_j(s))\,\mathrm{d}s+\int_0^t \sigma(Z_j(s))\,\mathrm{d}B(s)$.
Furthermore, it follows from \eqref{eq:Z} and \eqref{eq:Zm} that
$Z(t)=x-D(t)+Q(t)$ and $Z_j(t)=x-D_j(t)+Q_j(t)$,
which, together with \eqref{eq:EQ}-\eqref{eq:demand_compar}, imply that
\[
\mathbb{E}_x[Z(\theta_n)]<\mathbb{E}_x[Z_j(\theta_n)]\quad \text{for all $n\geq\bar{n}$},
\]
contradicting with \eqref{eq:Zj<Z}.
Therefore, \eqref{prop cost} holds.

It remains to consider the setup cost.
For function $K(\cdot)$, we can further claim
\begin{equation}
\label{eq:lim-sup-K}
\lim_{j\to\infty}\frac{\sup_{\xi\in[0,j]}K(\xi)}{j}=0.
\end{equation}
In fact, it follows from the second part in \eqref{eq:K} that for any $\epsilon>0$, there is a $n_{\epsilon}$ such that  $K(\xi)/\xi<\epsilon$ for all $\xi\geq n_{\epsilon}$. Further, there exits an $j_{\epsilon}\geq n_{\epsilon}$ such that $\sup_{\xi\in[0,n_{\epsilon}]}K(\xi)/j<\epsilon$ for all $j\geq j_{\epsilon}$.
Therefore, for all $j\geq j_{\epsilon}$,
\[
\frac{\sup_{\xi\in[0,j]}K(\xi)}{j}= \max\Big\{\frac{\sup_{\xi\in[0,n_{\epsilon}]}K(\xi)}{j},\frac{\sup_{\xi\in[n_{\epsilon},j]}K(\xi)}{j}\Big\}<\epsilon.
\]
Since $\epsilon$ is arbitrary, \eqref{eq:lim-sup-K} holds.

Now we consider the setup cost incurred by the orders under policy $Q_j$ in $(\mathcal{J}2)-(\mathcal{J}4)$. For the order in $(\mathcal{J}2)$, $Q_j$ and $Q$ incur the same setup cost.

Consider the orders under policy $Q_j$ in $(\mathcal{J}3)$. Let $t_1$ and $t_2$ denote any two consecutive ordering times with $t_1<t_2$. Let $X_j(t)=x-\int_0^t \mu(Z_j(u))\,\mathrm{d}u+\int_0^t \sigma(Z_j(t))\,\mathrm{d}B(u)$.
Recall the definition of $Z_j$ in \eqref{eq:Zm}, we have
\begin{align*}
Z_j(t_1)=X_j(t_1)+Q_{j}(t_1)\quad\text{and}\quad Z_j(t_2-)=X_j(t_2)+Q_{j}(t_2-),
\end{align*}
which, together with $Q_{j}(t_1)\leq Q_{j}(t_2-)$, imply
\[
X_j(t_1)-X_j(t_2)\geq Z_j(t_1)-Z_j(t_2-)=j-Z_j(t_2-)=Z_j(t_2)-Z_j(t_2-)=\Delta Q_j(t_2)\geq \frac{j}{2},
\]
where the first two equalities follow from $Z_j(t_1)=Z_j(t_2)=j$.
Let $\tau=\inf\{s\in(0,t_2-t_1]: X_j(t_1+s)=X_j(t_1)-j/2=j/2\}$. It follows from the second part in \eqref{eq:E-h-tau} and Assumption \ref{assumption:coefficients} that
\begin{align}
\mathbb{E}_x[\tau]
&=2\int_{\frac{j}{2}}^j \int_u^{\infty} \mathcal{M}(\mathrm{d}v)\,\mathrm{d}\mathcal{S}(u)\nonumber\\
&=2\int_{\frac{j}{2}}^j \int_u^{\infty}\frac{1}{\sigma^2(v)}\exp\Big(-\int_u^v \frac{2\mu(z)}{\sigma^2(z)}\,\mathrm{d}z\Big)\,\mathrm{d}v\,\mathrm{d}u\nonumber\\
&\geq \frac{2}{\bar{\sigma}^2}\int_{\frac{j}{2}}^j \int_u^{\infty}\exp\Big(-\frac{2\bar{\mu}}{\underline{\sigma}^2}(v-u)\Big)\,\mathrm{d}v\,\mathrm{d}u\nonumber\\
&=\frac{\underline{\sigma}^2 j}{2\bar{\mu}\bar{\sigma}^2}.\label{eq:E-tau}
\end{align}
Let $N_{j,1}(t)$ be the number of ordering in $(\mathcal{J}3)$ under $Q_j$ up to time $t$. Since $t_2-t_1\geq \tau$, we have
\[
\mathbb{E}_x[N_{j,1}(t)]\leq \frac{1}{\mathbb{E}_x[\tau]}t+1=\frac{2\bar{\mu}\bar{\sigma}^2}{\underline{\sigma}^2 j}t+1.
\]

Now consider the orders under $Q_j$ in $(\mathcal{J}4)$. Let $\tilde{t}_1$ and $\tilde{t}_2$ denote any two consecutive ordering times with $\tilde{t}_1<\tilde{t}_2$.
In this case, we claim that there must exist some $\tilde{t}_3\in [\tilde{t}_1,\tilde{t}_2)$ satisfying $Z_j(\tilde{t}_3)>j/2$. If $Z_j(\tilde{t}_1)\neq Z(\tilde{t}_1)$,
we must have $Z_j(\tilde{t}_1)=j$ and then choose $\tilde{t}_3=\tilde{t}_1$.
If $Z_j(\tilde{t}_1)=Z(\tilde{t}_1)$, assume that such $\tilde{t}_3$ does not exist in $[\tilde{t}_1,\tilde{t}_2)$, then the cases in $(\mathcal{J}1)$,  $(\mathcal{J}3)$, and  $(\mathcal{J}4)$ can not happen in $(\tilde{t}_1,\tilde{t}_2)$. This implies $Z_j(\tilde{t}_2-)= Z(\tilde{t}_2-)$, contradicting with the fact $Z_j(\tilde{t}_2-)\neq Z(\tilde{t}_2-)$.
Let $\tilde{\tau}=\inf\{s\in(0,\tilde{t}_2-\tilde{t}_3]:X_j(\tilde{t}_3+s)=X_j(\tilde{t}_3)-\frac{j}{2}\}$. Using the same derivations as in \eqref{eq:E-tau}, we have
\[
\mathbb{E}_x[\tilde{\tau}]\geq \frac{\underline{\sigma}^2 j}{2\bar{\mu}\bar{\sigma}^2}.
\]
Let $N_{j,2}(t)$ be the number of ordering in $(\mathcal{J}4)$ under $Q_j$ in $[0,t]$. Since $\tilde{t}_2-\tilde{t}_1\geq \tilde{t}_2-\tilde{t}_3\geq \tilde{\tau}$, we have
\[
\mathbb{E}_x[N_{j,2}(t)]\leq \frac{2\bar{\mu}\bar{\sigma}^2}{\underline{\sigma}^2 j}t+1.
\]

To sum up the holding/shortage cost, proportional cost, and setup cost discussed above, we have
\begin{align*}
\mathcal{C}(x,Q_j)-\mathcal{C}(x,Q)&\leq
\limsup_{t\to\infty}\frac{1}{t}\mathbb{E}_x\big[\mathbb{E}_x[N_{j,1}(t)]+\mathbb{E}_x[N_{j,2}(t)]\big]\sup_{\xi\in[0,j]}K(\xi)\\
&\leq \frac{4\bar{\mu}\bar{\sigma}^2}{\underline{\sigma}^2 }\frac{\sup_{\xi\in[0,j]}K(\xi)}{j},
\end{align*}
which, together with \eqref{eq:lim-sup-K}, implies that \eqref{eq:lim-AC}.
\end{proof}


\section{Concluding Remarks}
\label{sec:concluding}

In this paper, we used a two-step approach to prove the global optimality of an $(s,S)$ policy in an ergodic inventory control problem with inventory-dependent diffusion demand and general ordering costs. Specifically, we first applied a lower bound theorem to show the optimality of the selected policy in a subset of admissible policies, and then used a comparison theorem to establish the global optimality among all admissible policies.


\appendix

\section{Proof of Lemma \ref{lem:sS}}
\label{app:lemma-sS}

Let
\begin{equation}
\label{eq:gamma}
\gamma(s,S)=\frac{2\int_s^S \int_x^{\infty} h(y)\,\mathcal{M}(\mathrm{d}y)\,\mathrm{d}\mathcal{S}(x)}{2\int_s^S\int_{x}^{\infty} \,\mathcal{M}(\mathrm{d}y)\,\mathrm{d}\mathcal{S}(x)}.
\end{equation}
It follows from Assumptions \ref{assumption:coefficients} and \ref{assumption:h} (as well as Remark \ref{rem:h} (a)) that the conditions in
Lemma 2.1 in \cite{HelmesETAL18} hold.  Then, we have
\begin{align}
\lim_{s\to-\infty}\gamma(s,S)&=\lim_{(s,S)\to(-\infty,-\infty)}\gamma(s,S)=\lim_{z\to-\infty}h(z)=\infty\quad \text{and}\label{eq:lim-gamma}\\
\lim_{S\to\infty}\gamma(s,S)&=\lim_{(s,S)\to(\infty,\infty)}\gamma(s,S)=\lim_{z\to\infty}h(z)=\infty,\nonumber
\end{align}
which, together with the non-negativity of $c$ in Assumption \ref{assumption:c}, imply
\begin{align*}
\lim_{s\to-\infty}\alpha(s,S)&=\lim_{(s,S)\to(-\infty,-\infty)}\alpha(s,S)=\infty\quad \text{and}\quad \lim_{S\to\infty}\alpha(s,S)=\lim_{(s,S)\to(\infty,\infty)}\alpha(s,S)=\infty.
\end{align*}
Thus, we can find a finite positive number $B_1$ satisfying
\begin{equation}
\label{eq:alpha-1}
\inf_{s<S}\alpha(s,S) = \inf_{-B_1\leq s<S\leq B_1}\alpha(s,S).
\end{equation}
Let $\Delta=S-s$, then $\alpha(s,S)$ can be rewritten as
\[
\eta(s,\Delta)=\frac{2\int_s^{s+\Delta} \int_x^{\infty} h(y)\,\mathcal{M}(\mathrm{d}y)\,\mathrm{d}\mathcal{S}(x)+c(\Delta)}{2\int_s^{s+\Delta}\int_{x}^{\infty} \,\mathcal{M}(\mathrm{d}y)\,\mathrm{d}\mathcal{S}(x)}.
\]
From $\lim_{\Delta\downarrow 0} c(\Delta)>0$ (see Assumption \ref{assumption:c}), we have
$\lim_{\Delta\downarrow 0}\eta(s,\Delta)=\infty$,
thus we can find a finite positive number $B_2$ such that \eqref{eq:alpha-1} becomes
\[
\inf_{s<S}\alpha(s,S) =\min_{-B_1\leq s\leq B_1, B_2\leq \Delta\leq 2B_1}\eta(s,\Delta).
\]
Since $\eta(s,\Delta)$ is continuous in $s$, there exists an $s(\Delta)\in[-B_1,B_1]$ such that
\[
\eta(s(\Delta),\Delta)=\min_{-B_1\leq s\leq B_1}\eta(s,\Delta) \quad \text{for each $\Delta\in[B_2,2B_1]$}.
\]
Further, since $c(\Delta)$ is low semicontinuous and other parts in $\eta(s(\Delta),\Delta)$ is continuous in $\Delta$,
by the extreme value theorem (see Theorem B.2 in \cite{Puterman1994}), there exists a $\Delta^{\star}\in[B_2,2B_1]$ such that
\[
\eta(s(\Delta^{\star}),\Delta^{\star})\leq \eta(s(\Delta),\Delta)\quad \text{for all $\Delta\in[B_2,2B_1]$}.
\]
Let $s^{\star}=s(\Delta^{\star})$ and $S^{\star}=s^{\star}+\Delta^{\star}$, then we complete the proof.
\qed

\section{Proof of Lemma \ref{lem:underline-s}}
We show the existence of $\underline{s}$ satisfying \eqref{eq:underline-s1} and \eqref{eq:underline-s2} as follows: First, in part ($a$), we show that there exists an $\underline{s}_1\in(-\infty,s^{\star}]$ such that \eqref{eq:underline-s1} holds for any $\underline{s}\in(-\infty,\underline{s}_1]$; and in part ($b$), we show that we can find an $\underline{s}_2\in(-\infty,s^{\star}]$ such that $g'(z)-\alpha^{\star}\ell'(z)<0$ for any $z\leq \underline{s}_2$. Then, we let $\underline{s}=\underline{s}_1\wedge \underline{s}_2$, then both \eqref{eq:underline-s1} and \eqref{eq:underline-s2} hold.

($a$) First, by Assumption \ref{assumption:coefficients}, we have
\begin{align*}
\lim_{z\to-\infty}\int_{z}^{\infty}\,\mathcal{M}(\mathrm{d}u)
&=\lim_{z\to-\infty}\int_{z}^{\infty} \frac{1}{\sigma^2(u)}\exp\Big(-\int_c^u \frac{2\mu(y)}{\sigma^2(y)}\,\mathrm{d}y\Big)\, \mathrm{d}u\\
&\geq \lim_{z\to-\infty}\int_{z}^{c} \frac{1}{\sigma^2(u)}\exp\Big(-\int_c^u \frac{2\mu(y)}{\sigma^2(y)}\,\mathrm{d}y\Big)\, \mathrm{d}u\\
&\geq \lim_{z\to-\infty}\int_{z}^{c}\frac{1}{\bar{\sigma}^2}\exp\Big(\frac{2\underline{\mu}}{\bar{\sigma}^2}(c-u)\Big)\, \mathrm{d}u\\
&=\infty.
\end{align*}
Similarly, we have
\[
\lim_{z\to-\infty}\int_{z}^{\infty}h(u)\,\mathcal{M}(\mathrm{d}u)=\infty.
\]
Therefore, by L' H\^{o}pital's rule, we have
\[
\lim_{z\to -\infty}\frac{g(z)}{\ell(z)}
=\lim_{z\to -\infty}\frac{\int_{z}^{\infty}h(u)\,\mathcal{M}(\mathrm{d}u)}{\int_{z}^{\infty}\,\mathcal{M}(\mathrm{d}u)}
=\lim_{z\to-\infty}h(y)=\infty,
\]
which yields that we can find an $s^{\dagger}$ with $s^{\dagger}\leq s^{\star}$ such that
\begin{equation}
\label{eq:g/l>alpha1}
\frac{g(y)}{\ell(y)}\geq \alpha^{\star} \quad \text{for any $y\in(-\infty,s^{\dagger}]$}.
\end{equation}
Also, by L' H\^{o}pital's rule, we have
\begin{equation}
\label{eq:lim-g/l}
\lim_{z\to \infty}\frac{g(z)}{\ell(z)}
=\lim_{z\to \infty}\frac{\int_{z}^{\infty}h(u)\,\mathcal{M}(\mathrm{d}u)}{\int_{z}^{\infty}\,\mathcal{M}(\mathrm{d}u)}
=\lim_{y\to\infty}h(y)=\infty,
\end{equation}
which yields that there exists an $s^{\ddagger}$ with $s^{\ddagger}\geq s^{\star}$ such that
\begin{equation}
\label{eq:g/l>alpha2}
\frac{g(z)}{\ell(z)}\geq \alpha^{\star} \quad \text{for any $y\in[s^{\ddagger},\infty)$}.
\end{equation}
In addition, it follows from \eqref{eq:gamma} and \eqref{eq:lim-gamma} that
\[
\lim_{s\to-\infty}\frac{\int_{s}^S g(y)\,\mathrm{d}y}{\int_{s}^S \ell(y)\,\mathrm{d}y}=\lim_{y\to-\infty}h(y)=\infty
\quad \text{for any fixed $S\in\mathbb{R}$}.
\]
Then, there exists an $\underline{s}_1$ with $\underline{s}_1\leq s^{\dagger}$ such that
\begin{equation}
\label{eq:g/l>alpha3}
\frac{\int_{s}^S g(y)\,\mathrm{d}y}{\int_{s}^S \ell(y)\,\mathrm{d}y}\geq \alpha^{\star} \quad \text{for $S\in[s^{\dagger},s^{\ddagger}]$ and $s\leq \underline{s}_1$.}
\end{equation}

Now we can show that \eqref{eq:underline-s1} holds for any $\underline{s}\in(-\infty,\underline{s}_1]$.
If $s\geq \underline{s}$, we have
\[
\underline{\alpha}(s,S)=\alpha(s,S)\geq \alpha^{\star},
\]
where the inequality follows from \eqref{eq:sS-star}.
Next, we prove the case when $s<\underline{s}$ in three subcases: $S\leq s^{\dagger}$, $s^{\dagger}<S<s^{\ddagger}$, and $S\geq s^{\ddagger}$.
If $S\leq s^{\dagger}$, we have
\[
\underline{\alpha}(s,S)\geq \frac{\int_s^S g(y\vee\underline{s})\,\mathrm{d}y}{\int_s^S \ell(y\vee\underline{s})\,\mathrm{d}y}
\geq \frac{\int_s^S \alpha^{\star}\cdot\ell(y\vee\underline{s})\,\mathrm{d}y}{\int_s^S \ell(y\vee\underline{s})\,\mathrm{d}y} =\alpha^{\star},
\]
where the first inequlity follow the non-negativity of $c(\cdot)$ in Assumption \ref{assumption:c}, and the second inequality follows \eqref{eq:g/l>alpha1} with $s<S\leq s^{\dagger}$.
If $s^{\dagger}<S<s^{\ddagger}$, we have
\[
\underline{\alpha}(s,S)\geq \frac{\int_s^S g(y\vee\underline{s})\,\mathrm{d}y}{\int_s^S \ell(y\vee\underline{s})\,\mathrm{d}y}
=\frac{g(\underline{s})(\underline{s}-s)+\int_{\underline{s}}^S g(y\vee\underline{s})\,\mathrm{d}y}{\ell(\underline{s})(\underline{s}-s)+\int_{\underline{s}}^S \ell(y\vee\underline{s})\,\mathrm{d}y}
\geq \alpha^{\star},
\]
the the last inequality is derived from \eqref{eq:g/l>alpha1} and \eqref{eq:g/l>alpha3} with $\underline{s}\leq \underline{s}_1\leq s^{\dagger}<S<s^{\ddagger}$.
If $S\geq s^{\ddagger}$, we have
\[
\underline{\alpha}(s,S)\geq \frac{\int_s^S g(y\vee\underline{s})\,\mathrm{d}y}{\int_s^S \ell(y\vee\underline{s})\,\mathrm{d}y}
=\frac{g(\underline{s})(\underline{s}-s)+\int_{\underline{s}}^{s^{\ddagger}} g(y\vee\underline{s})\,\mathrm{d}y+\int_{s^{\ddagger}}^S g(y\vee\underline{s})\,\mathrm{d}y}{\ell(\underline{s})(\underline{s}-s)+\int_{\underline{s}}^{s^\ddagger} \ell(y\vee\underline{s})\,\mathrm{d}y+\int_{s^\ddagger}^{S} \ell(y\vee\underline{s})\,\mathrm{d}y}
\geq \alpha^{\star},
\]
where the last inequality holds due to \eqref{eq:g/l>alpha1}, \eqref{eq:g/l>alpha2}, and \eqref{eq:g/l>alpha3}.

($b$)  To prove that we can find an $\underline{s}_2\in(-\infty,s^{\star}]$  such that $g'(z)-\alpha^{\star}\ell'(z)<0$ for any $z\leq \underline{s}_2$, we will claim that
\[
\lim_{z\to-\infty} [g'(z)-\alpha^{\star}\ell'(z)]<0.
\]
It follows from the convexity of $h$ in Assumption \ref{assumption:h} that there exist $c_0>0$ and $z_0<0$ such that for all $z<z_0$,
\begin{equation}
\label{eq:z0}
h'(z)<-c_0.
\end{equation}
Then, for $z<z_0$, we rewrite $g'(z)-\alpha^{\star}\ell'(z)$ as
\begin{align}
&g'(z)-\alpha^{\star}\ell'(z)\nonumber\\
&=\frac{2\mu(z)}{\sigma^2(z)}\Big[g(z)-\alpha^{\star}\ell(z)-\frac{h(z)-\alpha^{\star}}{\mu(z)}\Big]\nonumber\\
&=\frac{2\mu(z)}{\sigma^2(z)}\int_0^{\infty}\Big[ \frac{2}{\sigma^2(y+z)}\exp\Big(-\int_z^{y+z}\frac{2\mu(u)}{\sigma^2(u)}\,\mathrm{d}u\Big)\Big(\big(h(y+z)-\alpha^{\star}\big)-\big(h(z)-\alpha^{\star}\big)\frac{\mu(y+z)}{\mu(z)}\Big)\Big]\,\mathrm{d}y\nonumber\\
&=\frac{2\mu(z)}{\sigma^2(z)}\big(\Lambda_1(z)-\Lambda_2(z)+\Lambda_3(z)\big),\nonumber
\end{align}
where the second equality holds because
\begin{equation*}
\int_0^{\infty} \frac{2\mu(y+z)}{\sigma^2(y+z)}\exp\Big(-\int_z^{y+z}\frac{2\mu(u)}{\sigma^2(u)}\,\mathrm{d}u\Big)\,\mathrm{d}y=1,
\end{equation*}
and in the last equality,
\begin{align*}
\Lambda_1(z)&=\int_{z_0-z}^{\infty}\Big[ \frac{2}{\sigma^2(y+z)}\exp\Big(-\int_z^{y+z}\frac{2\mu(u)}{\sigma^2(u)}\,\mathrm{d}u\Big)\big(h(y+z)-\alpha^{\star}\big)\Big]\,\mathrm{d}y,\\
\Lambda_2(z)&=\int_{z_0-z}^{\infty}\Big[ \frac{2}{\sigma^2(y+z)}\exp\Big(-\int_z^{y+z}\frac{2\mu(u)}{\sigma^2(u)}\,\mathrm{d}u\Big)\big(h(z)-\alpha^{\star}\big)\frac{\mu(y+z)}{\mu(z)}\Big]\,\mathrm{d}y,\quad \text{and}\\
\Lambda_3(z)&=\int_0^{z_0-z}\Big[ \frac{2}{\sigma^2(y+z)}\exp\Big(-\int_z^{y+z}\frac{2\mu(u)}{\sigma^2(u)}\,\mathrm{d}u\Big)\Big(\big(h(y+z)-\alpha^{\star}\big)-\big(h(z)-\alpha^{\star}\big)\frac{\mu(y+z)}{\mu(z)}\Big)\Big]\,\mathrm{d}y.
\end{align*}
If we can prove
\begin{equation}
\lim_{z\to-\infty}\Lambda_1(z)=\lim_{z\to-\infty}\Lambda_2(z)=0\quad \text{and} \quad \lim_{z\to-\infty}\Lambda_3(z)<0,\label{eq:lim-3}
\end{equation} then it follows from the positiveness of $\mu$ and the boundedness of $\mu$ and $\sigma$ (see Assumption \ref{assumption:coefficients}) that
\[
\lim_{z\to-\infty}  [g'(z)-\alpha^{\star}\ell'(z)]=\lim_{z\to-\infty}\frac{2\mu(z)}{\sigma^2(z)}\big(\Lambda_1(z)-\Lambda_2(z)+\Lambda_3(z)\big)<0.
\]
Thus, it remains to prove \eqref{eq:lim-3}.

First,  for $z<z_0$,  we rewrite $\Lambda_1$ as
\begin{align*}
\Lambda_1(z)
=\exp\Big(-\int_z^{z_0}\frac{2\mu(u)}{\sigma^2(u)}\,\mathrm{d}u\Big)
\int_{z_0}^{\infty}\Big[ \frac{2}{\sigma^2(y)}\exp\Big(-\int_{z_0}^{y}\frac{2\mu(u)}{\sigma^2(u)}\,\mathrm{d}u\Big)\big(h(y)-\alpha^{\star}\big)\Big]\,\mathrm{d}y.
\end{align*}
Since $\lim_{\abs{z}\to\infty} h(z)=\infty$ (Remark \ref{rem:h} ($b$)), there exist a $z_1>0$ such that for $\abs{z}>z_1$
\begin{equation}
\label{eq:h>gamma}
h(z)\geq \alpha^{\star},
\end{equation}
which, together with the polynomial boundedness of $h$ (Assumption \ref{assumption:h}) , implies
\begin{align*}
\int_{z_1}^{\infty}\Big[ \frac{2}{\sigma^2(y)}\exp\Big(-\int_{z_0}^{y}\frac{2\mu(u)}{\sigma^2(u)}\,\mathrm{d}u\Big)\big(h(y)-\alpha^{\star}\big)\Big]\,\mathrm{d}y&>0\quad\text{and}\\
\int_{z_1}^{\infty}\Big[ \frac{2}{\sigma^2(y)}\exp\Big(-\int_{z_0}^{y}\frac{2\mu(u)}{\sigma^2(u)}\,\mathrm{d}u\Big)\big(h(y)-\alpha^{\star}\big)\Big]\,\mathrm{d}y
&\leq \int_{z_1}^{\infty}\Big[ \frac{2}{\underline{\sigma}^2}\exp\Big(-\frac{2\underline{\mu}}{\bar{\sigma}^2}(y-z_0)\Big)\big(h(y)-\alpha^{\star}\big)\Big]\,\mathrm{d}y\\
&<\infty.
\end{align*}
Thus,
\begin{align*}
&\int_{z_0}^{\infty}\Big[ \frac{2}{\sigma^2(y)}\exp\Big(-\int_{z_0}^{y}\frac{2\mu(u)}{\sigma^2(u)}\,\mathrm{d}u\Big)\big(h(y)-\alpha^{\star}\big)\Big]\,\mathrm{d}y\\
&\quad=\big(\int_{z_0}^{z_1}+\int_{z_1}^{\infty}\Big)\Big[ \frac{2}{\sigma^2(y)}\exp\Big(-\int_{z_0}^{y}\frac{2\mu(u)}{\sigma^2(u)}\,\mathrm{d}u\Big)\big(h(y)-\alpha^{\star}\big)\Big]\,\mathrm{d}y
\end{align*}
is a finite number.
Furthermore, the boundedness of $\mu$ and $\sigma$ in Assumption \ref{assumption:h} implies
\[
\lim_{z\to-\infty}\exp\Big(-\int_z^{z_0}\frac{2\mu(u)}{\sigma^2(u)}\,\mathrm{d}u\Big)=0.
\]
Therefore, we have
\[
\lim_{z\to-\infty}\Lambda_1(z)=\lim_{z\to-\infty}\exp\Big(-\int_z^{z_0}\frac{2\mu(u)}{\sigma^2(u)}\,\mathrm{d}u\Big)
\int_{z_0}^{\infty}\Big[ \frac{2}{\sigma^2(y)}\exp\Big(-\int_{z_0}^{y}\frac{2\mu(u)}{\sigma^2(u)}\,\mathrm{d}u\Big)\big(h(y)-\alpha^{\star}\big)\Big]\,\mathrm{d}y=0.
\]

Second, \eqref{eq:h>gamma} and the boundedness of $\mu$ and $\sigma$ imply that for $z<-z_1$,
\begin{align*}
\Lambda_2(z)
&=\frac{h(z)-\alpha^{\star}}{\mu(z)}\int_{z_0-z}^{\infty} \frac{2\mu(y+z)}{\sigma^2(y+z)}\exp\Big(-\int_z^{y+z}\frac{2\mu(u)}{\sigma^2(u)}\,\mathrm{d}u\Big)\,\mathrm{d}y \\
&\leq \frac{h(z)-\alpha^{\star}}{\mu(z)}\int_{z_0-z}^{\infty} \frac{2\bar{\mu}}{\underline{\sigma}^2}\exp\Big(-\frac{2\underline{\mu}}{\bar{\sigma}^2}y\Big)\,\mathrm{d}y \\
&=\frac{\bar{\mu}\bar{\sigma}^2}{\underline{\mu}\underline{\sigma}^2}\frac{h(z)-\alpha^{\star}}{\mu(z)}
\exp\Big(-\frac{2\underline{\mu}}{\bar{\sigma}^2}(z_0-z)\Big).
\end{align*}
Therefore,
\[
0\leq \lim_{z\to-\infty}\Lambda_2(z)\leq \lim_{z\to-\infty}\frac{\bar{\mu}\bar{\sigma}^2}{\underline{\mu}\underline{\sigma}^2}\frac{h(z)-\alpha^{\star}}{\mu(z)}
\exp\Big(-\frac{2\underline{\mu}}{\bar{\sigma}^2}(z_0-z)\Big)=0,
\]
where the equality follows from the polynomial boundedness of $h$. Thus, we have
\[
\lim_{z\to-\infty}\Lambda_2(z)=0.
\]

Finally, we have
\begin{align*}
\lim_{z\to-\infty}\Lambda_3(z)
&\leq
\lim_{z\to-\infty}\int_0^{z_0-z}\Big[ \frac{2}{\sigma^2(y+z)}\exp\Big(-\int_z^{y+z}\frac{2\mu(u)}{\sigma^2(u)}\,\mathrm{d}u\Big)\big(h(y+z)-h(z)\big)\Big]\,\mathrm{d}y\\
&\leq\lim_{z\to-\infty}\int_0^{z_0-z} \frac{2}{\underline{\sigma}^2} \exp\Big(-\frac{2\underline{\mu}}{\bar{\sigma}^2}y\Big)(-c_0) y\,\mathrm{d}y\\
&=-\frac{c_0\bar{\sigma}^4}{2\underline{\mu}^2\underline{\sigma}^2}\\
&<0,
\end{align*}
where the first inequality holds due to \eqref{eq:h>gamma} and that $\mu(\cdot)$ is non-decreasing (see Assumption \ref{assumption:coefficients}(a)),
the second inequality is derived from \eqref{eq:z0}.
\qed

\section{Proof of Lemma \ref{lem:V}}
To prove \eqref{eq:bar-z}, we only need to prove
\begin{align}
\label{eq:lim-V'>0}
\lim_{z\to\infty}V'(z)>0,
\end{align}
which yields $\lim_{z\to\infty}V(z)=\infty$, and then \eqref{eq:bar-z} holds. We next prove \eqref{eq:lim-V'>0}. First, we have
\begin{align*}
\lim_{z\to\infty}g(z)
&=\lim_{z\to\infty}2\int_z^{\infty}\frac{1}{\sigma^2(u)}h(u)\exp\Big(-\int_z^u\frac{2\mu(y)}{\sigma^2(y)}\,\mathrm{d}y\Big)\,\mathrm{d}u\\
&\geq \lim_{z\to\infty}\frac{2}{\bar{\sigma}^2}h(z) \int_z^{\infty}\exp\Big(-\frac{2\bar{\mu}}{\underline{\sigma}^2}(u-z)\Big)\,\mathrm{d}u\\
&=\lim_{z\to\infty}\frac{\underline{\sigma}^2}{\bar{\mu}\bar{\sigma}^2}h(z)\\
&=\infty,
\end{align*}
where the inequality follows from $h'(z)>0$ for $z>0$ (Assumption \ref{assumption:h}) and the boundedness of $\mu$ and $\sigma$ in Assumption \ref{assumption:coefficients}.
This, together with \eqref{eq:lim-g/l} and the definition of $V$ in \eqref{eq:V}, implies that
\[
\lim_{z\to\infty}V'(z)=\lim_{z\to\infty}[g(z)-\alpha^{\star}\ell(z)]=\infty.
\]

Finally, \eqref{eq:V-polynomial} can be implied by the polynomial boundedness of $h$.
\qed

\bibliographystyle{plain}
\bibliography{References}

\begin{thebibliography}{10}

\bibitem{AltintasETAL08}
N.~Altintas, F.~Erhun, and S.~Tayur.
\newblock Quantity discounts under demand uncertainty.
\newblock {\em Management Science}, 54(4):777--792, 2008.

\bibitem{BaronETAL11}
Opher Baron, Oded Berman, and David Perry.
\newblock Shelf space management when demand depends on the inventory level.
\newblock {\em Production and Operations Management}, 20(5):714--726, 2011.

\bibitem{CadenillasETAL10}
A.~Cadenillas, P.~Lakner, and M.~Pinedo.
\newblock Optimal control of a mean-reverting inventory.
\newblock {\em Operations Research}, 58(6):1697--1710, 2010.

\bibitem{Caliskan-DemiragETAL12}
O.~Caliskan-Demirag, Y.~Chen, and Y.~Yang.
\newblock Ordering policies for periodic-review inventory systems with
  quantity-dependent fixed costs.
\newblock {\em Operations Research}, 60(4):785--796, 2012.

\bibitem{ChaoZipkin08}
X.~Chao and P.~Zipkin.
\newblock Optimal policy for a periodic-review inventory system under a supply
  capacity contract.
\newblock {\em Operations Research}, 56(6):887--896, 2008.

\bibitem{HeETAL17}
S.~He, D.~Yao, and H.~Zhang.
\newblock Optimal ordering policy for inventory systems with quantity-dependent
  setup costs.
\newblock {\em Mathematics of Operations Research}, 42(4):979--1006, 2017.

\bibitem{HelmesETAL17}
K.~L. Helmes, H.~Stockbridge, and C.~Zhu.
\newblock Continuous inventory models of diffusion type: Long-term average cost
  criterion.
\newblock {\em The Annals of Applied Probability}, 27(3):1831--1885, 2017.

\bibitem{HelmesETAL18}
K.~L. Helmes, H.~Stockbridge, and C.~Zhu.
\newblock A weak convergence approach to inventory control using a long-term
  average criterion.
\newblock {\em Advances of Applied Probability}, 50(4):1032--1074, 2018.

\bibitem{HelmesStockbridgeZhu2018}
K.~L. Helmes, R.~H Stockbridge, and C.~Zhu.
\newblock A weak convergence approach to inventory control using a long-term
  average criterion.
\newblock {\em Advances in Applied Probability}, 50(4):1032--1074, 2018.

\bibitem{Iglehart63}
D.~L. Iglehart.
\newblock Optimality of {$(s,\, S)$} policies in the infinite horizon dynamic
  inventory problem.
\newblock {\em Management Science}, 9(2):259--267, 1963.

\bibitem{Kuczma09book}
Marek Kuczma.
\newblock {\em An Introduction to the Theory of Functional Equations and
  Inequalities}.
\newblock Birkh$\ddot{a}$user, Berlin, {Second} edition, 2009.

\bibitem{PereraETAL17}
Sandun Perera, Ganesh Janakiraman, and Shun-Chen Niu.
\newblock Optimality of ($s, {S}$) policies in {EOQ} models with general cost
  structures.
\newblock {\em International Journal of Production Economics}, 187:216--228,
  2017.

\bibitem{PereraETAL18}
Sandun Perera, Ganesh Janakiraman, and Shun-Chen Niu.
\newblock Optimality of ($s, {S}$) inventory policies under renewal demand and
  general cost structures.
\newblock {\em Production and Operations Management}, 27(2):368--383, 2018.

\bibitem{Porteus71}
E.~Porteus.
\newblock On the optimality of generalized {$(s,\, S)$} policies.
\newblock {\em Management Science}, 17(7):411--426, 1971.

\bibitem{Puterman1994}
M.~L. Puterman.
\newblock {\em Markov Decision Processes: Discrete Stochastic Dynamic
  Programming}.
\newblock John Wiley $\&$ Son, Inc., New York, 1994.

\bibitem{Scarf60}
H.~Scarf.
\newblock {\em The optimality of {$(s,\, S)$} policies in the dynamic inventory
  problem}.
\newblock Mathematical Methods in the Social Sciences (P. Suppes, K. Arrow, and
  S. Karlin, eds.). Stanford University Press, Stanford, CA, USA, 1960.

\bibitem{YaoETAL15}
D.~Yao, X.~Chao, and J.~Wu.
\newblock Optimal control policy for a {Brownian} inventory system with concave
  ordering cost.
\newblock {\em Journal of Applied Probability}, 52(4):909--925, 2015.

\bibitem{ZhangCetinkaya17}
Liqing Zhang and S{\i}la {\c{C}}etinkaya.
\newblock Stochastic dynamic inventory problem under explicit inbound
  transportation cost and capacity.
\newblock {\em Operations Research}, 65(5):1267--1274, 2017.

\end{thebibliography}

\end{document}